\documentclass[11pt,a4paper]{amsart}
\usepackage[a4paper,margin=28mm]{geometry}
\usepackage{lmodern,microtype}
\usepackage{amssymb,mathtools,mathrsfs}
\usepackage{needspace}
\usepackage{booktabs,array,enumitem,aliascnt,xcolor,graphicx,placeins,longtable}
\usepackage[colorlinks=true,linkcolor=blue!55!black,citecolor=blue!55!black,urlcolor=blue!55!black]{hyperref}
\usepackage[nameinlink,capitalise,noabbrev]{cleveref}
\allowdisplaybreaks
\numberwithin{equation}{section}
\newtheorem{maintheorem}{Theorem}

\crefname{maintheorem}{Theorem}{Theorems}
\newtheorem{mainproposition}{Proposition}
\crefname{mainproposition}{Proposition}{Propositions}

\newaliascnt{lemma}{theorem}
\newtheorem{lemma}[lemma]{Lemma}
\aliascntresetthe{lemma}
\newaliascnt{proposition}{theorem}
\newtheorem{proposition}[proposition]{Proposition}
\aliascntresetthe{proposition}
\theoremstyle{remark}
\newaliascnt{remark}{theorem}
\newtheorem{remark}[remark]{Remark}
\aliascntresetthe{remark}
\newcommand{\R}{\mathbb R}
\newcommand{\Sph}{\mathbb S}

\newcommand{\tr}{\operatorname{tr}}
\newcommand{\dist}{\operatorname{dist}}
\newcommand{\Ric}{\operatorname{Ric}}
\newcommand{\eps}{\varepsilon}
\newcommand{\dd}{\,\mathrm d}
\DeclareMathOperator{\arccot}{arccot}
\DeclareMathOperator{\Sp}{Sp}
\DeclareMathOperator{\SO}{SO}

\makeatletter
\def\@settitle{\begin{center}%
  \baselineskip14\p@\relax\bfseries\@title
  \end{center}}
\makeatother
\begin{document}
\title[Lawlor's criterion and Hsiang-type minimizing cones]{Lawlor's curvature criterion and topologically trivial area-minimizing hypercones of Hsiang type}
\author{Hongbin Cui}
\address{Wu Wen-Tsun Key Laboratory of Mathematics, USTC, Chinese Academy of Sciences, School of Mathematical Sciences, University of Science and Technology of China, Hefei, Anhui 230026, China}
\email{cuihongbin@ustc.edu.cn}
\date{}
\subjclass[2020]{Primary 49Q05, 53C42; Secondary 53C21, 49Q20}
\keywords{area-minimizing hypercone, normal radius, focal radius, Lawlor's curvature criterion, minimal hypersphere}
\hypersetup{pdftitle={Lawlor's curvature criterion and topologically trivial area-minimizing hypercones of Hsiang type},pdfauthor={Hongbin Cui}}
\begin{abstract}
For a closed embedded minimal hypersurface in a unit sphere, we show
that the existence of a uniform vanishing angle in Lawlor's criterion
suffices for its cone to minimize area. The normal-wedge condition
follows from Howard's rolling theorem. We derive explicit curvature
bounds and apply them to the reported Hsiang--Sterling curvature
estimates for topologically trivial cones. The resulting
area-minimizing cones give an affirmative
answer to the existence question raised by Ilmanen and White (\emph{Camb. J. Math.}, 2015).
\end{abstract}
\maketitle

\section{Introduction}

Let \(\Sigma^{k-1}\subset\Sph^k(1)\) be a smooth closed embedded minimal
hypersurface, and let \(C(\Sigma)\subset\R^{k+1}\) be its cone.
We call such a cone \emph{regular}: it is smooth away from the vertex,
as in \cite{HardtSimon85}.
After choosing an orientation, we call \(C(\Sigma)\) \emph{area minimizing}
if its unit-ball truncation \(C(\Sigma)\cap B_1(0)\) has least mass
among all compactly supported integral \(k\)-currents in \(\R^{k+1}\)
with boundary the oriented link \(\Sigma\).
By dilation, this agrees with local area minimization in the sense of
\cite[Definition~33.1]{Simon83}; see also
\cite[Proposition~2.5]{CuiChen26}.

Area-minimizing hypercones arise as tangent cones at singular points of
area-minimizing hypersurfaces. Their geometry plays an important role
in regularity theory, including recent advances in generic regularity
\cite{ChodoshMantoulidisSchulze26,ChodoshMantoulidisSchulze24}.
This motivates effective criteria for deciding when a minimal
hypercone minimizes area.
For algebraic minimal hypercones, algebraic subcalibration arguments
provide effective criteria for area minimization; see the author's
recent work \cite{CuiLaplacian26}.

For more general minimal hypercones, particularly those arising from
constructions of embedded minimal hypersurfaces in spheres, Lawlor's
curvature criterion \cite{Lawlor91} provides a geometric approach.
It combines a differential
inequality for the normal determinant with a geometric condition ensuring
that the corresponding normal wedges do not intersect. His criterion
has both pointwise and global formulations
\cite[Theorems~1.2.1 and~1.3.5]{Lawlor91}.
For embedded minimal hypersurfaces in a sphere, Howard's rolling theorem
\cite[Theorem~3]{Howard99} identifies the rolling radius of each
complementary domain with its minimum focal distance. Tan, Tang, Xie and
Yan also obtain this spherical consequence in the proof of
\cite[Lemma~5.1]{TanTangXieYan26}. We combine this geometric fact with
Lawlor's vanishing-angle construction to obtain a convenient curvature
criterion for area minimization.

Fix a unit normal field \(\nu\) along \(\Sigma\), and set
\(A=-\nabla\nu\). Write
\(\lambda_1(p),\ldots,\lambda_{k-1}(p)\) for the eigenvalues of \(A_p\),
that is, the principal curvatures counted with multiplicity, and set
\begin{equation}\label{eq:curvature-notation}
 K=\max_{p\in\Sigma}\max_{1\le i\le k-1}|\lambda_i(p)|,
 \qquad
 S=\max_{p\in\Sigma}|A_p|^2.
\end{equation}
Here the squared norm of the second fundamental form is
\[
 |A_p|^2=\tr(A_p^2)=\sum_{i=1}^{k-1}\lambda_i(p)^2.
\]
Both quantities are independent of the choice of unit normal.
A \emph{uniform vanishing angle} means one obtained from a common
Lawlor profile valid at every point of \(\Sigma\) and for both unit
normals; its precise formulation is recalled in
\cref{sec:criterion}.

\Needspace{5\baselineskip}
\begin{maintheorem}[Lawlor's criterion for regular minimal hypercones]
\label{main:criterion}
Let \(k\ge3\), and let
\(\Sigma^{k-1}\subset\Sph^k(1)\) be a smooth closed embedded minimal
hypersurface. If a uniform vanishing angle exists in Lawlor's curvature
criterion, then the oriented cone \(C(\Sigma)\) is locally area minimizing
among integral currents.
\end{maintheorem}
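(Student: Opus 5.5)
We follow Lawlor's calibration-by-foliation construction and reduce the normal-wedge hypothesis in his global criterion \cite[Theorem~1.3.5]{Lawlor91} to a statement about focal distances, which Howard's rolling theorem then settles. Recall the setting. For each \(p\in\Sigma\) and each unit normal \(\pm\nu(p)\), Lawlor's profile is a curve in the two-plane spanned by \(p\) and \(\nu(p)\). It starts orthogonally from the ray through \(p\) and is governed by the ODE whose forcing term is the normal determinant \(\prod_i(1-t\lambda_i)\), estimated from below by a bound depending only on \(K\), \(S\), \(k\). It reaches the boundary ray of the wedge at a vanishing angle \(\theta_0\). Uniformity means one profile, and hence one angle \(\theta_0\), serves for all \(p\) and both normals.

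\textbf{Step 1: the sweep-out near \(C(\Sigma)\).} For each side of \(\Sigma\), we sweep the dilates of the profile curves over all \(p\). Inside the normal wedges
\[
W^\pm_p=\{r(\cos\theta\, p\pm \sin\theta\,\nu(p)) : r>0,\ 0\le\theta<\theta_0\},
\]
this produces a family of smooth hypersurfaces with boundary on \(C(\Sigma)\). Lawlor's differential inequality, which is exactly the defining property of the vanishing-angle profile, makes the unit normal field \(n\) of this family satisfy \(\operatorname{div} n\ge0\) on the side into which the family is pushed (in the sense of the calibration argument, with the correct sign). Once the wedges are disjoint, \(n\) extends to a unit vector field in the complement of \(C(\Sigma)\): we take it constant (horizontal, normal to the last leaf) in the region between the wedges. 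This extension, and the inequality \(\operatorname{div}\ge 0\), hold weakly across the wedge boundaries, where the leaves have become tangent to the boundary rays. The dual \(k\)-form \(\phi=\iota_n\,dV\) is then a weak subcalibration. The standard argument, namely Stokes on the region between a competitor \(T\) with \(\partial T=\Sigma\) and \(C(\Sigma)\cap B_1\), together with a nearest-point retraction onto \(\bar B_1\) that does not increase mass, gives \(\mathbf M(C(\Sigma)\cap B_1)\le\mathbf M(T)\). This is \cite[Theorem~1.3.5]{Lawlor91}, so we simply quote it, provided the normal-wedge condition holds.

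\textbf{Step 2: the normal-wedge condition.} We must show that the open wedges \(W^+_p\) (and likewise \(W^-_p\)) for distinct \(p\) are pairwise disjoint, and that each lies in its own complementary domain. Intersecting with \(\Sph^k\), this says the spherical normal segments of length \(\theta_0\) from \(\Sigma\) are disjoint. Equivalently, the normal exponential map is injective on \(\{(p,t):0\le t<\theta_0\}\) on each side; that is, \(\theta_0\) is at most the normal (rolling) radius of each complementary domain \(\Omega^\pm\) of \(\Sigma\) (Jordan--Brouwer applies since \(\Sigma\) is embedded). Howard's rolling theorem \cite[Theorem~3]{Howard99}, used as in \cite[Lemma~5.1]{TanTangXieYan26}, identifies the rolling radius of \(\Omega^\pm\) with its minimal focal distance. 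That focal distance is \(\min_{p,i}\arccot(\pm\lambda_i(p))\ge \arccot K\). It remains to check that \(\theta_0\le\arccot K\). This holds because Lawlor's profile must stop before the normal determinant \(\prod(1-\tan\theta\,\lambda_i)\) vanishes, i.e. before the first focal point. Indeed the ODE is only defined while the leaves stay immersed.

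\textbf{Main obstacle.} The delicate point is Step 2 in the form needed globally: Howard's theorem requires \(\Omega^\pm\) to have the required regularity and sphere geometry (distances below \(\pi/2\), convexity issues for large angles). The profile's vanishing angle may also need to be compared with a focal bound that does not involve \(K\) directly. I would first verify that \(\theta_0<\pi/2\) always, then apply Howard on each side separately. Finally, I would check carefully that a uniform \(\theta_0\) valid for both normals is what makes the extension in Step 1 consistent on both sides of the cone.
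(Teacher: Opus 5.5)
Your proposal is correct and follows the paper's route. Uniformity of the profile forces $\vartheta\le\arccot K=F$: at $t=1/K$ the extremal determinant vanishes, so the profile inequality would force $g=g'=0$ there. Howard's identity of rolling radius with minimum focal distance then makes the open normal wedges disjoint, and Lawlor's criterion applies.

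The only real difference is that the paper reproves this identity in the spherical case (\cref{prop:normal-radii}). There, an earliest collision has opposite terminal velocities, and the free-boundary index form summed over a parallel frame equals $-(k-1)L<0$ by minimality. This shows your flagged concerns about angles below $\pi/2$ or convexity do not arise.

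Your Step~1 recap is also inaccurate: outside the wedges Lawlor's retraction sends everything to the vertex, so the form vanishes there rather than being a constant unit field. This is harmless, since you quote Lawlor's Theorem~1.3.5.
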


In our notation, this geometric identity is
\(\rho=F=\arccot K\), where \(\rho\) is the uniform two-sided tubular
radius and \(F\) is the minimum focal distance. The first normal-return
radius satisfies \(N\ge2F\). A uniform vanishing angle is at most \(F\),
so the normal-radius condition in Lawlor's criterion is automatic.
\Needspace{4\baselineskip}
For lower dimensions, the unparenthesized entries of Lawlor's
Table~1.4.1 can be used directly; for \(k\ge12\), his dimension
comparison gives a single explicit bound.

Combining \cref{main:criterion} with Lawlor's comparison proposition
\cite[Proposition~1.4.2, pp.~22--23]{Lawlor91} and the
\((k,\alpha^2)=(12,19)\) entry in his Table~1.4.1 gives the following
explicit consequence. We record the comparison argument in
\cref{lem:lawlor-comparison}.

\begin{mainproposition}[An explicit curvature bound]\label{prop:explicit}
Let \(k\ge12\), and let
\(\Sigma^{k-1}\subset\Sph^k(1)\) be a smooth closed embedded minimal
hypersurface. If
\begin{equation}\label{eq:explicit-bound}
 S\le B_1(k):=\frac{k^2}{8},
\end{equation}
then \(C(\Sigma)\) is locally area minimizing among integral currents.
\end{mainproposition}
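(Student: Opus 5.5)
The plan is to reduce \cref{prop:explicit} to \cref{main:criterion}: I will show that the hypothesis \(S\le k^2/8\) yields a uniform vanishing angle in Lawlor's curvature criterion. Recall the shape of Lawlor's pointwise test. At a point \(p\in\Sigma\) with principal curvatures \(\lambda_i\) (with respect to either unit normal), one integrates the ODE for the profile curve whose normal determinant involves \(\prod_i(1-\lambda_i\tan\theta)\). This produces a vanishing angle \(\theta_0(p)\) that depends only on the multiset \(\{\lambda_i\}\). Lawlor's Proposition~1.4.2 shows that, for the purpose of bounding \(\theta_0\) from above, the worst case with \(\sum\lambda_i=0\) and \(\sum\lambda_i^2\le\alpha^2\) is controlled by a one-parameter comparison profile. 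His Table~1.4.1 then gives, for each dimension \(k\), a value \(\alpha^2\) such that \(|A_p|^2\le\alpha^2\) guarantees a vanishing angle with a common profile.

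First, I would state and prove the comparison lemma \cref{lem:lawlor-comparison}. Its content is that for \(k\ge12\), the condition \(|A_p|^2\le k^2/8\) at every \(p\) places all points into a single admissible class with a common Lawlor profile. To obtain the base case, I use the \((k,\alpha^2)=(12,19)\) entry of the table: since \(12^2/8=18\le19\), the bound \(S\le k^2/8\) is admissible at \(k=12\). Next, I use Lawlor's dimension comparison, which says that the admissible \(\alpha^2\) grows at least quadratically in \(k\). I would rescale the profile variable by \(\theta\mapsto\theta\sqrt{12/k}\)-type scaling and compare the logarithmic derivative of the normal determinant, \(-\sum\log(1-\lambda_i t)\). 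Minimality lets the linear term cancel, so the leading term is \(\tfrac12|A|^2t^2\). Under this scaling, the admissibility constant scales like \((k/12)^2\cdot 19\cdot(\text{correction})\), and the correction must stay above \(k^2/8\). I would check that \(19\cdot k^2/144\ge k^2/8\), equivalently \(19/144\ge 1/8=18/144\), which holds with a little room for the higher-order terms.

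Second, uniformity. The comparison profile is chosen depending only on \(k\) and the bound \(S\), not on \(p\). The pointwise data also enter only through \(\{\lambda_i\}\), which is symmetric under \(\nu\mapsto-\nu\) because that map sends \(\lambda_i\mapsto-\lambda_i\) and preserves both \(|A|^2\) and the zero trace. Together these give one profile valid at every point and for both normals, which is exactly a uniform vanishing angle in the sense of \cref{sec:criterion}. \cref{main:criterion} then gives local area minimization. The normal-radius requirement needs no separate argument, since by the discussion after \cref{main:criterion} the vanishing angle is at most \(F\le N/2\).

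I expect the main obstacle to be the dimension comparison. I must verify that Lawlor's monotonicity in \(k\) really transfers the \(k=12\) threshold to \(\alpha^2(k)\ge k^2/8\) for all \(k\ge12\), controlling the higher-order terms in the expansion of \(\log(1-\lambda_i t)\) uniformly. My first attempt would be to bound these terms using \(|\lambda_i|\le\sqrt S\) and \(\sum|\lambda_i|^3\le S^{3/2}\). I would then follow Lawlor's own argument on pp.~22--23, checking that the slack \(19/144-18/144\) absorbs the errors.
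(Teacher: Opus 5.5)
Your outline agrees with the paper's: reduce to \cref{main:criterion}, start from Lawlor's \((k_0,\alpha_0^2)=(12,19)\) entry, transfer it to dimension \(k\), and use \(\lambda_i\mapsto-\lambda_i\) for the two normals. However, the transfer step, which you defer as ``the main obstacle,'' is where the proof lives, and your sketch of it is wrong.

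The rescaling is linear, not \(\theta\mapsto\theta\sqrt{12/k}\). That choice would give \(\alpha^2=19k/12\), linear in \(k\), which contradicts your own \((k/12)^2\cdot19\). With \(u=kt/12\) and \(h(t)=g(u)\) one has \(h'(t)=\tfrac{k}{12}g'(u)\), and therefore
\[
 Q_k[h](t)=\Bigl(g(u)-\frac uk\,g'(u)\Bigr)^2+\frac{g'(u)^2}{144}\le Q_{12}[g](u),
\]
since \(g\ge0\ge g'\) and \(k\ge12\). Because \(E_\alpha(t)=(1-\alpha t)e^{\alpha t}\) depends only on \(\alpha t\), we get \(E_{\sqrt{19}}(u)=E_{k\sqrt{19}/12}(t)\). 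The new first zero \(12T_0/k\) is at most \(1/\alpha\) because \(\tan 11.23^\circ<1/\sqrt{19}\). This is \cref{lem:lawlor-comparison}. The comparison is exact: there are no error terms, and the margin \(19/144-18/144\) is never used. In fact the argument gives \(S\le19k^2/144\).

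What makes one profile valid at every \(p\) and for both \(\eps\) is Lawlor's dimension-free determinant estimate, \cref{lem:determinant}: \(\det(I-tA)\ge E_\alpha(t)\) on \([0,1/\alpha]\) whenever \(\tr A=0\) and \(|A|^2\le\alpha^2\). The table entry is computed against exactly this \(E_{\sqrt{19}}\). Your bound \(\sum_i|\lambda_i|^3\le S^{3/2}\) is the first step of its proof, but it has to be taken to all orders: \(\sum_i|\lambda_i|^m\le|A|^m\) for \(m\ge2\), then sum the series for \(\log\det(I-tA)\). After that, nothing remains to absorb.

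A scheme that keeps only \(-\tfrac12|A|^2t^2\) and charges the rest to the \(1/144\) slack cannot work. Near the end of the profile interval, \(x=\alpha_0t\approx0.87\). There \(\log(1-x)+x\approx-1.14\), while \(-x^2/2\approx-0.37\). So the higher-order terms dominate the quadratic one rather than perturb it.
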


Proposition~\ref{prop:explicit} may be viewed as an area-minimizing
counterpart of the classical curvature criterion for stability:
the radial Hardy inequality gives stability under
\(S\le(k-2)^2/4\); see \cite[Section~6.1]{Simons68}.
The same comparison with Lawlor's \((12,19)\) entry in fact gives
\(S\le19k^2/144\) for \(k\ge12\).
The next proposition improves this bound using an explicit polynomial
profile for Lawlor's exponential determinant estimate.

\begin{mainproposition}[A sharper curvature bound]\label{prop:improved}
Let \(k\ge24\), and let
\(\Sigma^{k-1}\subset\Sph^k(1)\) be a smooth closed embedded minimal
hypersurface. If
\begin{equation}\label{eq:improved-bound}
 S\le B_2(k):=\frac{11k^2}{72},
\end{equation}
then \(C(\Sigma)\) is locally area minimizing among integral currents.
In particular, the simpler bound \(S\le3k^2/20\) suffices.
\end{mainproposition}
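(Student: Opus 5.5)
We reduce \cref{prop:improved} to \cref{main:criterion}. It suffices to produce, under \(S\le 11k^2/72\) and \(k\ge24\), a single Lawlor profile that vanishes at a common angle \(\theta_0\) for every point of \(\Sigma\) and both unit normals. Recall Lawlor's normal determinant along a normal geodesic from \(p\) at angle \(\theta\) in the sphere:
\[
 \prod_{i=1}^{k-1}\bigl(\cos\theta-\lambda_i\sin\theta\bigr).
\]
By the AM--GM/log-concavity bound used by Lawlor for minimal hypersurfaces (\(\sum\lambda_i=0\)), this is bounded below in terms of \(|A_p|^2\le S\) alone. Concretely, writing \(t=\tan\theta\), we have \(\sum_i\log(1-\lambda_i t)\ge -\tfrac{S t^2}{2}\,c(Kt)\) with an explicit correction \(c\). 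A cruder but simpler route is the bound \(\prod(1-\lambda_it)\ge 1-\tfrac{S}{2}t^2\) type estimates valid while \(Kt<1\), and \(K\le\sqrt{S(k-2)/(k-1)}\). This yields the exponential determinant estimate \(\ge\exp(-\tfrac{S}{2}t^2(1+O(\sqrt S\,t)))\), which is symmetric in \(\nu\mapsto-\nu\). Hence uniformity over both normals is automatic.

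\textbf{Step 1.} Fix the ODE. Lawlor's vanishing-angle condition asks for a profile \(r(\theta)\), \(r(0)=1\), whose associated ODE (coming from the calibration of the wedge foliation in dimension \(k\)) forces \(r\to\infty\), i.e. a vanishing angle \(\theta_0\) with \(\theta_0\le F=\arccot K\), once the determinant lower bound holds. Rather than solving the ODE exactly, I would insert an explicit polynomial comparison profile, e.g. for \(\log\) of the determinant bound take
\[
 g(t)=1-\tfrac{S}{2}t^2+a\,S^{3/2}t^3,
\]
or a polynomial lower bound for \(\exp(-\alpha^2t^2/2)\) with \(\alpha^2=S\cdot 72/(11k^2)\cdot\) normalization. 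Then I would verify Lawlor's sufficient inequality (his Proposition~1.4.2 style comparison, recorded in \cref{lem:lawlor-comparison}) for this profile. This reduces to a one-variable polynomial inequality in the rescaled variable \(s=kt\), with \(k\) entering only through terms of order \(1/k\).

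\textbf{Step 2.} Check the inequality. After rescaling \(\theta\approx s/k\), the condition becomes: a certain explicit polynomial \(P_k(s)\) is positive on \([0,s_0)\) and its zero \(s_0\) satisfies \(s_0/k\le\arccot K\). The value \(11/72\) should come out as the threshold for the limiting polynomial \(P_\infty\). For \(k\ge24\), the \(1/k\) error terms are absorbed by monotonicity in \(k\); the cutoff \(24\) arises as the smallest \(k\) where the leading coefficient margin beats the correction. Finally, \(3/20<11/72\) gives the simpler bound.

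\textbf{Main obstacle.} The hard step is the explicit polynomial inequality with uniform control of the \(O(1/k)\) and cubic \(K t\) corrections for all \(k\ge24\). There is also the requirement that the vanishing angle stays below \(\arccot K\), using \(K^2\le S(k-2)/(k-1)\). I would first try to bound all corrections by a single term \(C s^3/k\) and verify positivity at \(k=24\) numerically, then argue that the correction decreases in \(k\).
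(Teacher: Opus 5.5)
Your outer architecture matches the paper's: reduce to \cref{main:criterion}, use a determinant bound depending only on \(|A_p|^2\) (so both normals are handled at once), check one polynomial profile at \(k=24\), and pass to all \(k\ge24\) because the \(1/k\) terms only decrease when \(g\ge0\) and \(g'\le0\) (this is \cref{lem:lawlor-comparison}).

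The gap is the step that carries all the content. You never exhibit a profile satisfying Lawlor's inequality with the constant \(11/72\), and the ingredients you propose for it fail. The determinant input has to be \cref{lem:determinant}, \(\det(I-tA)\ge(1-\alpha t)e^{\alpha t}\) on \([0,1/\alpha]\). Your ``cruder'' bound \(\prod_i(1-\lambda_it)\ge1-\tfrac S2t^2\) is false, since eigenvalues \((2a,-a,-a)\) give \(1-3a^2t^2-2a^3t^3\). More seriously, the profile must satisfy \(Q_k[g]=(g-\tfrac tk g')^2+\tfrac{g'^2}{k^2}\le P_{p,\eps}^2\), and since \(g'<0\) this forces \(g\) to lie well below the determinant. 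Write \(g=1-ct^2+O(t^3)\) and compare \(t^2\)-coefficients at a point with \(|A_p|^2=S\). One needs
\[
 2c\Bigl(1-\frac2k\Bigr)-\frac{4c^2}{k^2}\ge S,
\]
whose maximum over \(c\) is exactly the stability bound \((k-2)^2/4\). Your candidate \(g=1-\tfrac S2t^2+aS^{3/2}t^3\) has \(c=S/2\), which violates this for every \(S>0\). The paper's profile has \(c=\tfrac94\alpha^2\).

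Your explanation of the constant is also mistaken. In the variable \(s=kt\), with \(g(t)=\phi(s)\) and \(\alpha=\beta k\), the condition reads \((\phi-\tfrac sk\phi')^2+\phi'^2\le(1-\beta s)^2e^{2\beta s}\). Its left side decreases in \(k\), so the \(1/k\) terms are unfavorable at finite \(k\). If \(\beta^2=11/72\) were the threshold of the limiting problem, there would be no margin left to absorb them. What has to be shown is that \(\beta^2=11/72\) is already attained at \(k=24\).

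The paper does this with one explicit profile, \(g(t)=G(\sqrt{88}\,t)\) where \(G(u)=(1-u^2)^2\bigl(1-\tfrac{u^2}4+\tfrac{24u^3}{125}-\tfrac{47u^4}{500}\bigr)\), so that \(11/72=88/24^2\). After replacing \(e^{2u}\) by its degree-\(8\) Taylor polynomial, \(Q_{24}[g]\le E_{\sqrt{88}}^2\) reduces to nonnegativity on \([0,1]\) of a degree-\(12\) polynomial, certified by positive Bernstein coefficients. The number \(24\) is simply the base dimension of this certificate, not a computed smallest \(k\). Without a concrete profile and such a verification, your outline gives at best some unspecified constant, not \(11/72\).

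Finally, \(\vartheta\le\arccot K\) is not an extra obstacle. Any admissible profile vanishes by \(t=1/K\) (see the proof of \cref{main:criterion}), and here \(T=1/\alpha\le1/\sqrt S\le1/K\) anyway.
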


The coefficient \(11/72\) improves Lawlor's bound
\(19/144\) by \(3/19\), or approximately \(15.79\%\).
The proof uses one polynomial profile in dimension \(24\), followed
by Lawlor's exponential dimension comparison.

We now give an application of Theorem A and the two curvature
propositions to cones over Hsiang's minimal hyperspheres and their
extensions. Hsiang \cite{Hsiang83I,Hsiang83II} constructed
non-equatorial embedded minimal hyperspheres. Hsiang and Sterling
\cite[Theorems~1 and~2]{HsiangSterling86} extended these constructions
and estimated the second fundamental form to establish stability for
many of the resulting cones.
Ilmanen and White \cite[p.~2]{IlmanenWhite15}
pointed out that the resulting cones are topologically trivial and
asked whether they minimize area.

Using the global curvature estimates reported by Hsiang and Sterling
\cite[Section~5 and Table~2]{HsiangSterling86}, we obtain the following
application of Proposition~\ref{prop:improved}.

\begin{maintheorem}[Hsiang-type cones]\label{main:hsiang}
Let \(\Sigma^{k-1}\subset\Sph^k(1)\) be one of the non-equatorial embedded
minimal hyperspheres listed in Hsiang--Sterling
\cite[Theorem~2 and Table~2]{HsiangSterling86} with one of the groups
\[
\begin{array}{ll}
 \SO(2)\times\SO(q),&q=19,\ldots,50,100,200,\\
 S(U(2)\times U(q)),&q=8,\ldots,20,50,100,\\
 \Sp(2)\times\Sp(q),&q=4,\ldots,20,25,50.
\end{array}
\]
Its oriented cone is a
nonflat, topologically trivial, regular area-minimizing hypercone
in \(\R^{k+1}\), with its only singularity at the vertex.
\end{maintheorem}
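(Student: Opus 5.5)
The plan is to reduce \cref{main:hsiang} to Proposition~\ref{prop:improved}, checking the curvature bound case by case with the Hsiang--Sterling data.

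First I would fix the dimensions. The group \(\SO(2)\times\SO(q)\) acts on \(\R^{2}\oplus\R^{q}\), so \(k+1=q+2\). The group \(S(U(2)\times U(q))\) acts on \(\mathbb C^{2}\oplus\mathbb C^{q}\), giving \(k+1=2(q+2)\). The group \(\Sp(2)\times\Sp(q)\) acts on \(\mathbb H^{2}\oplus\mathbb H^{q}\), giving \(k+1=4(q+2)\). In each case the smallest admissible \(q\) yields \(k\ge 24\): for \(q=19\) we get \(k=20\), and I will return to this below; for \(q=8\) we get \(k=19\); for \(q=4\) we get \(k=23\).

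Since these small values fall short of the threshold \(k\ge24\) of Proposition~\ref{prop:improved}, I would instead use the lower-dimensional route mentioned after \cref{main:criterion}. For the cases with \(12\le k<24\), I would apply Proposition~\ref{prop:explicit} in its sharpened form \(S\le 19k^2/144\). Where even that fails, I would fall back on \cref{main:criterion} combined with the corresponding unparenthesized entry of Lawlor's Table~1.4.1 for that exact \(k\), via \cref{lem:lawlor-comparison}. For \(k\ge24\), I would use \(S\le 11k^2/72\), or the simpler bound \(3k^2/20\).

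The core step is the numerical comparison. Hsiang--Sterling \cite[Section~5 and Table~2]{HsiangSterling86} report an upper bound \(S_{\mathrm{HS}}(G,q)\) for \(\max|A|^2\) on each listed sphere. For each listed pair \((G,q)\), I would verify that \(S_{\mathrm{HS}}\) lies below the applicable threshold (\(B_2(k)\), \(19k^2/144\), or the table value). The listed ranges were presumably chosen exactly so that this holds. I expect this bookkeeping, matching the reported estimates to the correct threshold in the borderline low-\(q\) cases, to be the main obstacle. In particular, I need to ensure that the Hsiang--Sterling quantity is the full \(|A|^2\) and not, say, a normalized or orbit-space curvature. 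If their table gives bounds in the form \(|A|^2\le c\,(k-2)^2/4\) (the stability normalization), I would convert using \(c(k-2)^2/4\le 11k^2/72\).

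The remaining assertions are structural. The surface is regular because \(\Sigma\) is smooth and embedded, so \(C(\Sigma)\) is smooth away from \(0\). It is topologically trivial because \(\Sigma\) is a sphere, so \(C(\Sigma)\) is homeomorphic to \(\R^k\), and each complementary region is likewise a cone over a ball. It is nonflat because \(\Sigma\) is non-equatorial by Hsiang's construction. Its only singular point is the vertex: a cone over a non-totally-geodesic link is singular at \(0\), since the tangent cone there is itself and is not a plane. Minimality then follows from the curvature step.
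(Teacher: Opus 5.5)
Your overall route is the paper's: feed the Hsiang--Sterling curvature maxima into Proposition~\ref{prop:improved}, then read off the structural properties. Your treatment of regularity, topological triviality, nonflatness and the vertex singularity matches the paper's. The gap is the dimension count, and it breaks the step you yourself call the core.

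These groups do not act on \(\R^2\oplus\R^q\), \(\mathbb C^2\oplus\mathbb C^q\), \(\mathbb H^2\oplus\mathbb H^q\). Hsiang--Sterling use the action on \(2\times q\) matrices over \(\R\), \(\mathbb C\), \(\mathbb H\) (the isotropy representations of the rank-two Grassmannians), plus a trivial one-dimensional summand. Thus \(k+1=2q+1,\,4q+1,\,8q+1\), i.e.\ \(k=2q,\,4q,\,8q\). This is what the paper's table records: \(k=38\) for \(\SO(2)\times\SO(19)\), and \(k=32\) for both \(S(U(2)\times U(8))\) and \(\Sp(2)\times\Sp(4)\). Your count cannot be right: \(\SO(2)\times\SO(q)\) acts on \(\Sph^{q+1}\subset\R^2\oplus\R^q\) with cohomogeneity one. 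Its invariant minimal hypersurfaces are therefore the Clifford-type products \(\Sph^1\times\Sph^{q-1}\), not spheres. Your text is also internally inconsistent, since it asserts \(k\ge24\) and then lists \(k=20,19,23\).

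This is not a cosmetic slip, because with your values of \(k\) the numerical comparison fails and none of your fallbacks can rescue it. At \(k=20,19,23\) the reported maxima \(190.0\), \(134.4\), \(121.3\) exceed even the stability value \((k-2)^2/4=81,\ 72.25,\ 110.25\). No threshold of the kind you propose can exceed that value. Any lower bound for \(\det(I-tA)\) valid for all trace-free \(A\) with \(|A|^2\le\alpha^2\) is at most \(1-\tfrac12\alpha^2t^2+O(t^3)\). Expanding \eqref{eq:profile-inequality} to second order at \(t=0\) then forces \(\alpha^2\le(k-2)^2/4\). So \(19k^2/144\), an exact-dimension Lawlor table entry, and \(B_2\) all fail at your \(k\). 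Even in your \(B_2\) range, \(\SO(2)\times\SO(50)\) would get \(k=51\) and \(B_2(51)\approx397.4<511\).

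With the correct dimensions every listed case has \(k\ge32\), so Proposition~\ref{prop:improved} applies uniformly with no case split. The check \(\widehat S\le11k^2/72\) holds with relative reserve at least \(13.8\%\); in the three smallest cases it reads \(190.0\le220.6\), \(134.4\le156.4\) and \(121.3\le156.4\). The borderline low-\(q\) cases you worried about do not arise. That corrected argument is exactly the paper's proof.
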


Here ``topologically trivial'' has the meaning used by Ilmanen and
White; these are the cones with embedded spherical links described
in \cite[p.~2]{IlmanenWhite15}.
These examples give an affirmative answer to their existence question.
By Table~\ref{tab:hs-data}, the selected values lie at least ten percent
below \(B_2(k)\), providing a substantial margin for this comparison.

\begin{remark}[Further applications]\label{rem:future-applications}
The cones in Hsiang--Sterling's stability theorem
\cite[Theorem~2]{HsiangSterling86} have dimension at least \(14\).
Refining Lawlor's vanishing profile, retaining the number and
multiplicities of principal curvatures in Lawlor's determinant estimate,
or sharpening global curvature estimates may extend the range of
Hsiang-type cones covered by Theorem~A.
Further potential applications arise from constructions using
equivariance or isoparametric foliations
\cite{LaiWei26,FiresterTsiamis26} and from the embedded minimal
hypersurfaces of Carlotto and Schulz \cite{CarlottoSchulz23};
Perdomo \cite{Perdomo26} studies the spectral properties and numerical
profile data of the latter.
\end{remark}

Section~\ref{sec:normal-geometry} recalls the normal-radius identities and
expands Howard's collision and index arguments in the spherical setting.
Section~\ref{sec:criterion} proves
\cref{main:criterion,prop:explicit,prop:improved}.
Section~\ref{sec:applications}
proves \cref{main:hsiang} and compares the Hsiang--Sterling data.

\section{Normal radii and the rolling theorem}
\label{sec:normal-geometry}

Let \(\Sigma^{k-1}\subset\Sph^k(1)\), \(k\ge3\), be smooth, closed,
embedded and minimal. It is connected by Frankel's intersection theorem
\cite{Frankel66}, and its complement consists of two connected domains
\(\Omega_+\) and \(\Omega_-\)
\cite[Section~1, p.~104]{HardtSimon85}.
Choose the unit normal \(\nu\) pointing into \(\Omega_+\), and write
\[
 E_\eps(p,t)=\cos t\,p+\eps\sin t\,\nu(p),
 \qquad \eps\in\{+1,-1\}.
\]
We distinguish the first return to \(\Sigma\), the first focal point,
and the loss of the nearest-point property along this normal geodesic.
All distances below are spherical distances on the unit sphere.
Distance to \(\Sigma\) agrees with intrinsic distance to the boundary
in either closed domain: a shortest spherical segment cannot leave
the domain before first meeting its boundary.
For background on second variation, focal points and the cut locus of
a point, see \cite[Chapter~1, Sections~7--9, and Chapter~5]{CheegerEbin08}.
The boundary cut-locus statements used below are the versions in
\cite[Propositions~2.1--2.2]{Howard99}.

\subsection{Return, focal and cut distances}
The first normal-return times and their global infimum are
\begin{equation}\label{eq:first-return}
 n_\eps(p)=\inf\{t\in(0,2\pi]:E_\eps(p,t)\in\Sigma\},
 \qquad N=\inf_{p,\eps}n_\eps(p).
\end{equation}
They satisfy \(0<n_\eps(p)<2\pi\). The smaller of \(n_+(p)\) and
\(n_-(p)\) is Lawlor's angular normal radius at the ray through \(p\)
\cite[Definition~1.1.3, p.~7]{Lawlor91}.
On each side, the infimum of the first-return times is Howard's
\emph{normal width} \cite[Definition~4.1]{Howard99}.
The geodesic is initially perpendicular to \(\Sigma\); perpendicularity
at its return point is not part of this definition.

For a principal vector \(e_i\in T_p\Sigma\), variation of the initial
point gives the normal Jacobi field
\begin{equation}\label{eq:normal-jacobi}
 J_i(t)=d_pE_\eps(e_i)
       =(\cos t-\eps\lambda_i(p)\sin t)e_i(t),
\end{equation}
where \(e_i(t)\) is parallel translation along the normal geodesic.
Thus a focal point occurs when the normal exponential map loses rank.
The spherical focal-point formula
\cite[Theorem~2.1(b), p.~11]{CecilRyan15} gives
\begin{equation}\label{eq:focal-radius}
 f_\eps(p)=\min_i\arccot\bigl(\eps\lambda_i(p)\bigr),
 \qquad F=\min_{p,\eps}f_\eps(p)=\arccot K.
\end{equation}
Here \(\arccot\) takes values in \((0,\pi)\), and
\(\arccot0=\pi/2\). Focal distance is therefore determined by the
second fundamental form at the initial point.

The normal cut times and the two-sided tubular radius are
\begin{equation}\label{eq:cut-radius}
 \begin{split}
 c_\eps(p)&=\sup\{a\in[0,n_\eps(p)]:
 \dist(E_\eps(p,t),\Sigma)=t\ \text{for }0\le t\le a\},\\
 \rho_\eps&=\inf_p c_\eps(p),\qquad
 \rho=\min\{\rho_+,\rho_-\}.
 \end{split}
\end{equation}
Before the cut time, \(p\) is the unique nearest point on \(\Sigma\).
At the cut time either a focal point is reached or another minimizing
normal segment reaches the same point
\cite[Proposition~2.1]{Howard99}. In particular,
\(c_\eps(p)\le f_\eps(p)\) and \(\rho\le F\).
The cut locus on the \(\eps\)-side is
\[
 \mathcal C_\eps
 =\{E_\eps(p,c_\eps(p)):p\in\Sigma\}.
\]
Howard's \emph{rolling radius} is
\begin{equation}\label{eq:rolling-radius}
 \operatorname{Roll}(\overline{\Omega_\eps})
 =\dist(\Sigma,\mathcal C_\eps)=\rho_\eps.
\end{equation}
For every \(0<r<\rho_\eps\), the spherical ball
\(B(E_\eps(p,r),r)\) lies in \(\Omega_\eps\) and is tangent to
\(\Sigma\) at \(p\). The same radius can be used at every boundary
point. This is the rolling-ball interpretation; it differs from the
radius of the largest ball that fits somewhere in the domain.

\begin{figure}[tbp]
 \centering
 \includegraphics[width=\textwidth]{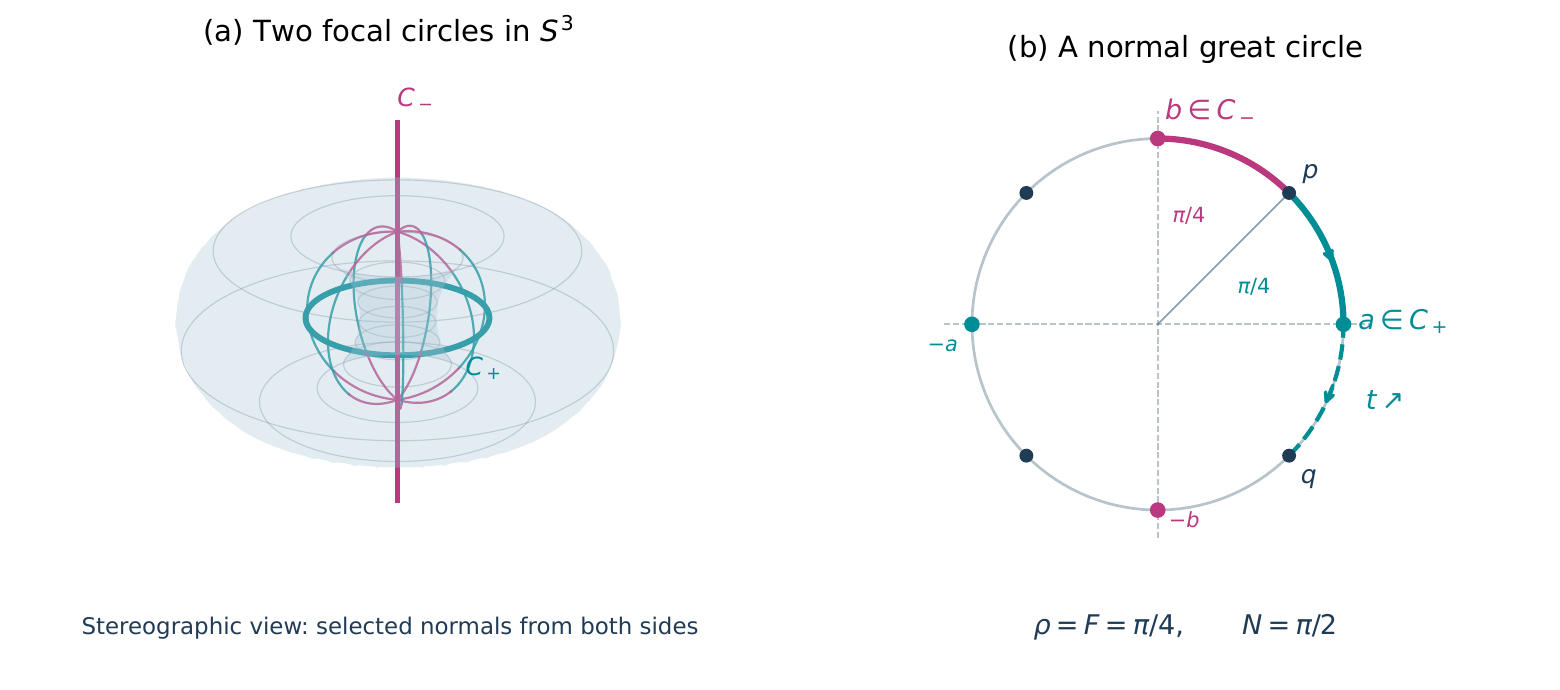}
 \caption{The minimal Clifford torus
 \(\Sigma=\{(x,y)\in\R^2\oplus\R^2:|x|=|y|=1/\sqrt2\}\subset\Sph^3\).
 Left: stereographic projection of the torus, selected normal geodesics,
 and its two focal circles \(C_+=\{(x,0):|x|=1\}\) and
 \(C_-=\{(0,y):|y|=1\}\). The latter passes through the projection
 pole and is displayed as a line. Distances are spherical, not Euclidean
 distances in this projection. Right: the normal great circle in
 \(\operatorname{span}\{a,b\}\), where \(a\in C_+\), \(b\in C_-\).
 Starting at \(p=(a+b)/\sqrt2\) with \(\nu=(a-b)/\sqrt2\),
 the positive normal reaches \(a\) at time \(\pi/4\) and first
 returns at \(q=(a-b)/\sqrt2\) at time \(\pi/2\).
 The principal curvatures are \(\pm1\), and
 \(c_\pm=f_\pm=\pi/4\), \(n_\pm=\pi/2\).
 Thus \(\rho=F=\pi/4\) and \(N=\pi/2\).
 The stereographic view is computed from the standard parametrization.
 For related visualizations, see the
 \href{https://virtualmathmuseum.org/Surface/clifford_torus/clifford_torus.html}{3DXM Virtual Math Museum}
 \cite{3DXMClifford} and \cite[Figure~1]{GarciaSotomayor09}.}
 \label{fig:clifford-radii}
\end{figure}

\subsection{The earliest collision and the index form}
We give the spherical version of Howard's collision and index arguments
\cite[Propositions~2.1--2.2 and proof of Theorem~4.3]{Howard99},
then derive the first-return bound used in Lawlor's criterion.

\begin{proposition}[A consequence of Howard's rolling theorem]
\label{prop:normal-radii}
The three global radii satisfy
\begin{equation}\label{eq:normal-radii}
 \rho=F=\arccot K,\qquad N\ge2F.
\end{equation}
Consequently, open normal wedges of any common angular radius at most
\(F\) are pairwise disjoint away from the origin.
\end{proposition}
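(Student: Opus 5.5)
We must prove \(\rho=F=\arccot K\) and \(N\ge 2F\), and then deduce that the normal wedges are disjoint. The inequality \(\rho\le F\) and the formula \(F=\arccot K\) are already recorded in \eqref{eq:focal-radius} and \eqref{eq:cut-radius}. So the real content is the reverse inequality \(\rho\ge F\), which is Howard's rolling theorem in the spherical setting. After that, the bound \(N\ge 2F\) and the wedge statement should follow from short arguments.

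For \(\rho_\eps\ge F\) we follow the plan of Howard's Theorem~4.3. Suppose \(\rho_\eps<F\). By compactness of \(\Sigma\) and lower semicontinuity of \(c_\eps\), choose \(p\) with \(c_\eps(p)=\rho_\eps=:r\). The point \(x=E_\eps(p,r)\) is not focal, since \(r<F\le f_\eps(p)\). By \cite[Proposition~2.1]{Howard99}, a second minimizing normal segment from some \(q\neq p\) then reaches \(x\) at time \(r\).

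This is the earliest collision. Since \(x\) realizes the minimum of \(\dist(\mathcal C_\eps,\Sigma)\), a first-variation argument (\cite[Proposition~2.2]{Howard99}) shows the two segments arrive at \(x\) with opposite tangent vectors. Otherwise, moving \(x\) slightly along the cut locus would decrease the distance to \(\Sigma\). Hence the two segments join into a single geodesic \(\gamma\) of length \(2r\) that is perpendicular to \(\Sigma\) at both ends \(p\) and \(q\) and lies in \(\overline{\Omega_\eps}\).

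Now we use the index form along \(\gamma\). Take a parallel field \(e(t)\) along \(\gamma\) that is tangent to \(\Sigma\) at both ends. The second variation of length for variations with endpoints free on \(\Sigma\) is
\[
 I(e,e)=-\int_0^{2r}|e|^2\,dt+\langle A_p e,e\rangle\pm\langle A_q e,e\rangle .
\]
Minimality gives \(\tr A=0\), so summing over an orthonormal parallel frame of \(k-1\) such fields kills the boundary terms. The sum is \(-2r(k-1)<0\). Hence some variation of \(\gamma\) with endpoints on \(\Sigma\) strictly shortens it.

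This contradicts minimality of \(\gamma\) among curves in \(\overline{\Omega_\eps}\) joining \(\Sigma\) to itself through \(x\). More precisely, each shortened nearby curve has a point whose distance to \(\Sigma\) is less than \(r\) but which lies on the cut locus, violating \(\dist(\Sigma,\mathcal C_\eps)=r\).

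Making this last step rigorous is the main obstacle. We must check that the shortened curves still produce cut points, using continuity of the nearest-point map near \(\gamma\). We also need the sign conventions for the boundary terms at \(q\), where the normal is \(-\eps\nu\) relative to the arrival direction. The parallel fields are tangent at \(q\) because the segment is perpendicular to \(\Sigma\) there.

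Next, \(N\ge 2F\). Suppose the normal geodesic \(E_\eps(p,\cdot)\) first returns to \(\Sigma\) at a point \(E_\eps(p,t_0)\). On \([0,t_0)\) it stays in \(\overline{\Omega_\eps}\), and its distance to \(\Sigma\) equals \(t\) only up to \(c_\eps(p)\ge\rho=F\). At time \(t\le t_0\), the distance to \(\Sigma\) is at most \(t_0-t\), measured back along the geodesic to the return point. Taking \(t=F\) and using \(\dist(E_\eps(p,F),\Sigma)=F\) gives \(F\le t_0-F\). Hence \(n_\eps(p)\ge 2F\).

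Finally, the wedge disjointness. An open normal wedge of angular radius \(r\le F\) at \(p\) is the cone over \(\{E_\eps(p,t):|t|<r\}\). A common point of the wedges at \(p\ne p'\) would project radially to a point of \(\Sph^k\) at distance less than \(F=\rho\) from \(\Sigma\). That point would have two distinct nearest-point normal segments, contradicting uniqueness of the nearest point inside the tubular radius \(\rho\). Points on opposite sides cannot coincide, since they lie in the disjoint domains \(\Omega_+\) and \(\Omega_-\). The case \(t=0\) is handled by embeddedness.
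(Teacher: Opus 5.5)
Your proposal is correct and takes essentially the same route as the paper: Howard's earliest-collision argument with opposite terminal velocities, the summed index form killed by \(\tr A=0\) and \(\Ric(T,T)=k-1\), then the half-return bound and uniqueness of nearest points for the wedges (the paper proves attainment of \(\rho_\eps\) directly rather than quoting semicontinuity of \(c_\eps\)). The step you flag as the main obstacle is the paper's Step~2: \(x\) has exactly the two nearest points \(p,q\), so near \(x\) one has \(\dist(\cdot,\Sigma)=\min\{d_1,d_2\}\) and the cut locus is the smooth hypersurface \(\{d_1=d_2\}\), which \(\gamma\) crosses transversally; hence every nearby boundary-to-boundary curve meets it at some point \(y\) and has length at least \(2\dist(y,\Sigma)\ge 2r\).
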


\begin{proof}
\emph{Step 1: an earliest nonfocal collision has opposite terminal
velocities.}
Suppose that \(\rho<F\), and choose a side with rolling radius
\(\rho\). We first verify that this infimum is attained. Choose \(p_j\in\Sigma\)
with \(t_j=c_\eps(p_j)\to\rho\). For large \(j\), \(t_j<F\), so
the boundary cut-point alternative \cite[Proposition~2.1]{Howard99}
gives \(q_j\ne p_j\) with
\[
 z_j=E_\eps(p_j,t_j)=E_\eps(q_j,t_j),
 \qquad \dist(z_j,\Sigma)=t_j.
\]
By compactness of \(\Sigma\), pass to a subsequence with
\(p_j\to p\) and \(q_j\to q\). If \(p=q\), both pairs
\((p_j,t_j)\) and \((q_j,t_j)\) approach \((p,\rho)\).
Since \(\rho<F\), the normal exponential map is a local
diffeomorphism there, contradicting these distinct preimages.
Hence \(p\ne q\). Taking limits yields
\[
 z=E_\eps(p,\rho)=E_\eps(q,\rho),
 \qquad \dist(z,\Sigma)=\rho.
\]
Thus \(z\) is a cut point realizing the infimum, and there are
two distinct minimizing normal segments
\[
 \alpha_1,\alpha_2:[0,\rho]\longrightarrow\overline{\Omega_\eps},
 \qquad \alpha_1(\rho)=\alpha_2(\rho)=z,
\]
starting at distinct points \(p,q\in\Sigma\). Their lengths agree
because both realize \(\dist(z,\Sigma)\).

Set \(u_i=\alpha_i'(\rho)\). The absence of focal points gives smooth
local distance branches \(d_1,d_2\) near \(z\), corresponding to
small boundary patches near \(p,q\), with
\(\nabla d_i(z)=u_i\) and \(d_i(z)=\rho\).
The vectors \(u_1,u_2\) are distinct, since otherwise geodesic
uniqueness would give \(\alpha_1=\alpha_2\).
Hence \(H=\{d_1=d_2\}\) is a smooth hypersurface near \(z\).
If \(u_2\ne-u_1\), the nonzero vector \(w=-(u_1+u_2)\) satisfies
\begin{equation}\label{eq:collision-shortening}
 w\in T_zH,\qquad
 d d_1(w)=d d_2(w)=-1-\langle u_1,u_2\rangle<0.
\end{equation}
Moving from \(z\) in this direction along \(H\) therefore produces
two distinct normal segments of the same length less than \(\rho\)
with a common endpoint. This contradicts injectivity of the normal
exponential map before the uniform cut distance. Thus
\begin{equation}\label{eq:opposite-velocities}
 \alpha_2'(\rho)=-\alpha_1'(\rho).
\end{equation}
The same collinearity at an earliest collision is established in
Lawlor's nonintersection argument
\cite[proof of Lemma~1.3.1, pp.~15--16]{Lawlor91}.
The two segments consequently join to a smooth geodesic
\(\gamma:[0,L]\to\overline{\Omega_\eps}\), where \(L=2\rho\),
orthogonal to \(\Sigma\) at both ends; see
\cref{fig:collision}.

\begin{figure}[tbp]
 \centering
 \includegraphics[width=\textwidth]{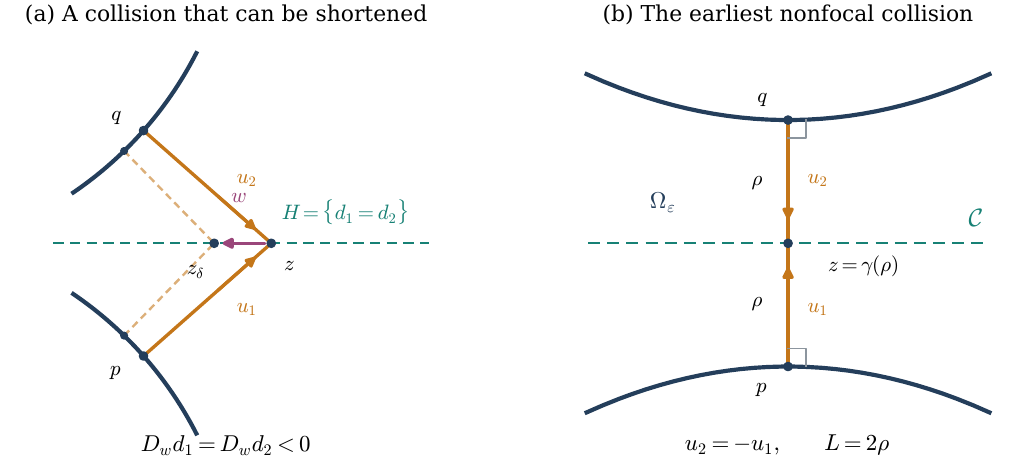}
 \caption{Local diagrams under the hypothetical assumption
 \(\rho<F\); the
 curved arcs represent boundary patches of the same hypersurface.
 Left: if the terminal unit vectors are not opposite, the direction
 \(w=-(u_1+u_2)\) is tangent to \(H=\{d_1=d_2\}\) and decreases
 both distance branches. Right: the two normals, each directed towards
 \(z\), have opposite terminal velocities. Reversing the second
 segment joins them to a smooth geodesic of length \(2\rho\),
 perpendicular to the boundary at \(p,q\). The dashed horizontal
 line represents the local cut locus, transverse to this geodesic.
}
 \label{fig:collision}
\end{figure}

\emph{Step 2: the joined segment is locally length minimizing with
endpoints free on the boundary.}
The same argument applied to any third minimizing segment to \(z\)
forces its terminal vector to equal \(u_2\); geodesic uniqueness then
identifies it with \(\alpha_2\). Thus there are exactly two nearest
boundary points. Compactness excludes other nearest boundary points
away from these two patches in a sufficiently small neighborhood \(U\) of
\(z\). Consequently,
\[
 \dist(x,\Sigma)=\min\{d_1(x),d_2(x)\}\qquad(x\in U),
\]
and the cut locus in \(U\) is the smooth hypersurface
\(H\cap U\) \cite[Proposition~2.1]{Howard99}.
Equation~\eqref{eq:opposite-velocities} shows that \(\gamma\)
crosses it transversely.
Every sufficiently close curve with endpoints on \(\Sigma\) must
also cross this local cut locus. If \(y\) is a crossing point, each
of its two portions has length at least \(\dist(y,\Sigma)\ge\rho\).
The total length is therefore at least \(2\rho=L\).
This is the local minimality assertion in
\cite[Proposition~2.2]{Howard99}. In particular, the index form is
nonnegative on variation fields tangent to \(\Sigma\) at the endpoints.

\emph{Step 3: the index form contradicts local length minimality.}
Let \(\eta\) be the inward unit normal of the chosen domain and let
\(A^\eta=-\nabla\eta\). Put \(T=\gamma'\); then
\(T(0)=\eta(p)\) and \(T(L)=-\eta(q)\).
For a smooth field \(V\perp T\) along \(\gamma\), tangent to
\(\Sigma\) at both ends, take a smooth variation whose endpoint
curves lie in \(\Sigma\) and whose variation field is \(V\).
The second variation of length is
\cite[proof of Theorem~4.3]{Howard99}
\begin{equation}\label{eq:boundary-index-form}
 \begin{split}
 I(V,V)={}&\int_0^L
 \bigl(|D_sV|^2-\langle R(V,T)T,V\rangle\bigr)\dd s\\
 &-\langle A^\eta_pV(0),V(0)\rangle
  -\langle A^\eta_qV(L),V(L)\rangle.
 \end{split}
\end{equation}
Both boundary terms have a minus sign. Indeed, the endpoint term in
the second variation is
\(\langle\nabla_VV,T\rangle|_0^L\), while the inward normals
are \(T(0)\) and \(-T(L)\). This gives precisely the two terms
displayed in \eqref{eq:boundary-index-form}.

Take an orthonormal parallel frame \(V_1,\ldots,V_{k-1}\) perpendicular
to \(T\). It is tangent to \(\Sigma\) at both endpoints. Since the
unit sphere has \(\Ric(T,T)=k-1\), summing
\eqref{eq:boundary-index-form} gives
\begin{equation}\label{eq:index-contradiction}
 \begin{split}
 0\le\sum_{i=1}^{k-1}I(V_i,V_i)
 &=-\int_0^L\Ric(T,T)\dd s
   -\tr A^\eta_p-\tr A^\eta_q\\
 &=-(k-1)L<0.
 \end{split}
\end{equation}
The boundary traces vanish by minimality. This contradiction proves
\(\rho=F\).

\begin{figure}[tbp]
 \centering
 \includegraphics[width=\textwidth]{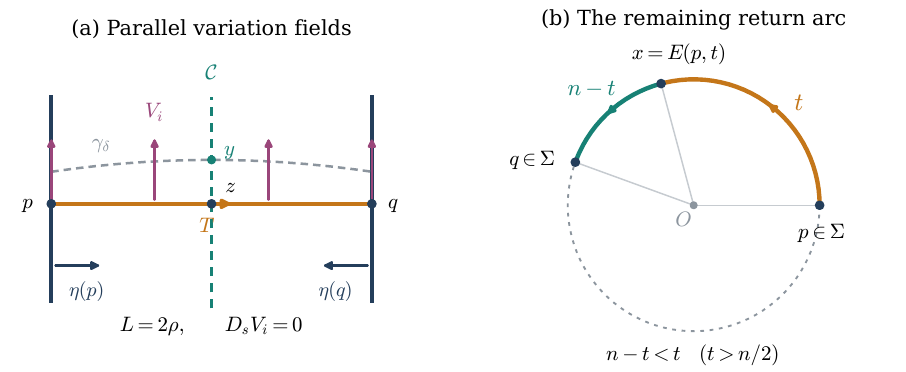}
 \caption{The two remaining geometric steps. Left: a nearby
 boundary-to-boundary curve \(\gamma_\delta\) crosses the local cut
 locus at \(y\); parallel
 fields along \(\gamma\) have admissible endpoint values. Their
 summed index is \(-(k-1)L\), in the spherical metric.
 Right: along a normal great circle, if \(q=E_\eps(p,n)\) is the
 first return and \(n/2<t<n\), the arc from \(E_\eps(p,t)\) to
 \(q\) has length \(n-t<t\). This proves \(c_\eps(p)\le n/2\).}
 \label{fig:index-return}
\end{figure}

\emph{Step 4: comparison with the first return and with normal wedges.}
If \(n=n_\eps(p)\) and \(n/2<t<n\), the remaining arc from
\(E_\eps(p,t)\) to \(E_\eps(p,n)\in\Sigma\) has length
\(n-t<t\). Therefore
\begin{equation}\label{eq:cut-return-comparison}
 c_\eps(p)\le\frac{n_\eps(p)}2,
 \qquad N\ge2\rho=2F.
\end{equation}
Finally, for \(0<a\le\rho\) define the open normal wedge
\[
 W_p(a)=\{r(\cos s\,p+\sin s\,\nu(p)):r>0,\ |s|<a\}.
\]
Every point of its spherical section has the unique nearest point
\(p\) on \(\Sigma\). Distinct wedges thus cannot intersect.
Taking \(a=F=\rho\) proves the last assertion.
\end{proof}

The identity \(\rho=F\) concerns global minima: it does not assert
\(c_\eps(p)=f_\eps(p)\) pointwise or identify the whole cut locus
with the focal set.

\FloatBarrier

\section{Lawlor's criterion and the comparison argument}
\label{sec:criterion}

For \(p\in\Sigma\) and \(\eps=\pm1\), put
\[
 P_{p,\eps}(t)=\det(I-t\eps A_p).
\]
In the profile formulation of Lawlor's construction
\cite[Lemma~2.3.6 and Theorem~2.3.8]{Lawlor91}, a common
vanishing profile is a nonincreasing function \(g\in C^1([0,T])\),
piecewise \(C^2\), such that
\begin{equation}\label{eq:profile-conditions}
 g(0)=1,\quad g'(0)=0,\quad
 g(t)>0\ (0\le t<T),\quad g(T)=0,
\end{equation}
and
\begin{equation}\label{eq:profile-inequality}
 \left(g(t)-\frac{t}{k}g'(t)\right)^2+
 \left(\frac{g'(t)}{k}\right)^2
 \le P_{p,\eps}(t)^2
 \qquad(p\in\Sigma,\ \eps=\pm1,\ 0\le t\le T).
\end{equation}
The associated common vanishing angle is \(\vartheta=\arctan T\).
The corresponding global lower envelope is
\[
 P_*(t)=\inf_{p\in\Sigma}\min_{\eps=\pm1}\det(I-t\eps A_p).
\]
On an interval where these determinants are nonnegative, the
right-hand side of \eqref{eq:profile-inequality} may be replaced by
\(P_*(t)^2\), or by the square of any nonnegative lower bound for
\(P_*(t)\). Lawlor's criterion then applies
when the open normal wedges of angle \(\vartheta\) are disjoint.
In the normal-radius formulation, it suffices that
\(\vartheta\le F\) and \(2\vartheta\le N\);
see \cite[Lemma~1.3.1 and Theorem~1.3.5]{Lawlor91}.

\begin{proof}[Proof of \cref{main:criterion}]
We first check that \(\vartheta\le F\).
If \(K=0\), then \(F=\pi/2>\arctan T\).
If \(K>0\), choose \(p,\eps\) so that \(\eps A_p\) has eigenvalue
\(K\), and set \(t_f=1/K\). Were \(T>t_f\), the right-hand side
of \eqref{eq:profile-inequality} would vanish at \(t_f\), forcing
\(g'(t_f)=g(t_f)=0\), contrary to
\eqref{eq:profile-conditions}. Hence
\[
 \vartheta=\arctan T\le\arctan(1/K)=F.
\]
This is the uniform version of Lawlor's focal-distance observation
\cite[p.~15]{Lawlor91}.
By \cref{prop:normal-radii}, the corresponding open normal wedges are
disjoint and \(2\vartheta\le2F\le N\). Lawlor's criterion proves the
area-minimizing conclusion.
\end{proof}

For comparison with the calibration formulation, extend \(g\) evenly
to \([-T,T]\) and use the coordinates
\[
 \Psi(r,p,t)=r(p+t\nu(p)),\qquad r>0.
\]
Writing \(\sigma\) for the pullback of the volume form of \(\Sigma\),
the associated form on the open tube is
\begin{equation}\label{eq:calibration-form}
 \omega=d\left(\frac{r^k}{k}g(t)\sigma\right),\qquad
 \|\omega\|_*^2=
 \frac{(g-tg'/k)^2+(g'/k)^2}{\det(I-tA_p)^2}.
\end{equation}
It restricts to the volume form of the oriented cone.
The extension across the wedge boundary and the area comparison are
provided by Lawlor's construction
\cite[Theorem~2.3.8 and Chapter~3]{Lawlor91}.

We use the following determinant estimate of Lawlor
\cite[Corollary~1.3.4]{Lawlor91}.

\begin{lemma}[Lawlor's determinant estimate]\label{lem:determinant}
Let \(A\) be a real symmetric matrix with \(\tr A=0\) and
\(|A|^2\le\alpha^2\), where \(\alpha>0\). Then
\begin{equation}\label{eq:exponential-bound}
 \det(I-tA)\ge E_\alpha(t):=(1-\alpha t)e^{\alpha t},
 \qquad 0\le t\le1/\alpha.
\end{equation}
\end{lemma}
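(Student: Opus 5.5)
The plan is to reduce the matrix inequality to a one-variable convexity-type estimate on the eigenvalues. Let \(\lambda_1,\dots,\lambda_n\) be the eigenvalues of \(A\), so that \(\sum_i\lambda_i=0\) and \(\sum_i\lambda_i^2\le\alpha^2\). Put \(x_i=t\lambda_i\) and \(s=\alpha t\in[0,1]\). Then \(\sum_i x_i=0\) and \(\sum_i x_i^2\le s^2\); in particular \(|x_i|\le s\) for every \(i\).

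The endpoint cases need a word first. When \(s<1\), each factor \(1-x_i\ge1-s\) is positive, so \(\det(I-tA)>0\) and logarithms may be taken. At \(t=1/\alpha\) we have \(E_\alpha(1/\alpha)=0\). Each factor \(1-x_i\) is still nonnegative there, since \(x_i\le1\), so the determinant is \(\ge0\) and the inequality holds trivially. It therefore suffices to treat \(0\le s<1\).

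Next, introduce \(\varphi(x)=\log(1-x)+x\) for \(x<1\). Because \(\sum_i x_i=0\), we get
\[
\log\det(I-tA)=\sum_i\log(1-x_i)=\sum_i\varphi(x_i),
\qquad
\log E_\alpha(t)=\log(1-s)+s=\varphi(s).
\]
So the claim becomes \(\sum_i\varphi(x_i)\ge\varphi(s)\).

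The key device is an integral representation. Since \(-\varphi(x)=\int_0^x\frac{u}{1-u}\,du\), the substitution \(u=xv\) gives
\[
-\varphi(x)=x^2\psi(x),\qquad \psi(x)=\int_0^1\frac{v}{1-xv}\,dv,\qquad x<1.
\]
The integrand is positive and increasing in \(x\) for each \(v\in[0,1]\), because \(xv<1\) throughout. Hence \(\psi\) is positive and nondecreasing on \((-\infty,1)\). This step is the one I would expect to be the main point. The naive power-series argument shows monotonicity only for \(x\ge0\), while negative eigenvalues genuinely occur. The integral form handles both signs at once.

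The conclusion then follows from \(x_i\le|x_i|\le s\) and monotonicity of \(\psi\):
\[
-\sum_i\varphi(x_i)=\sum_i x_i^2\psi(x_i)\le\psi(s)\sum_i x_i^2\le s^2\psi(s)=-\varphi(s).
\]
This is exactly \(\sum_i\varphi(x_i)\ge\varphi(s)\). Exponentiating gives \(\det(I-tA)\ge(1-\alpha t)e^{\alpha t}\) for \(0\le t<1/\alpha\), and together with the endpoint case this gives \eqref{eq:exponential-bound} on all of \([0,1/\alpha]\).
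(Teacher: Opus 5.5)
Your proof is correct, and it takes a different route from the one the paper relies on. The paper gives no argument of its own for this lemma. It imports the estimate from Lawlor's Corollary~1.3.4, where it is a corollary of his sharper, dimension-dependent bound \(F(\alpha,t,m)\) for traceless \(m\times m\) matrices; the paper mentions this bound when citing Zhang. There \((1-\alpha t)e^{\alpha t}\) serves as the dimension-free envelope of \(F\). Lawlor's route requires an extremal analysis of the admissible eigenvalue configurations. In return it keeps information about the matrix size, which is the kind of refinement the paper's closing remark on retaining the number and multiplicities of principal curvatures has in mind.

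Your route skips the optimization entirely. The identity \(\log(1-x)+x=-x^2\psi(x)\), with \(\psi\) nondecreasing on \((-\infty,1)\), combined only with \(x_i\le s\) and \(\sum_i x_i^2\le s^2\), gives the exponential bound in a few lines. It works uniformly in the dimension and for eigenvalues of either sign, and your handling of the endpoint \(t=1/\alpha\) through nonnegativity of each factor is right.

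Since \cref{lem:lawlor-comparison} and \cref{prop:explicit,prop:improved} use only \(E_\alpha\), your argument would make the paper self-contained at this step. What it gives up is the sharper finite-dimensional bound.
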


Write
\[
 Q_k[g](t)=\left(g(t)-\frac{t}{k}g'(t)\right)^2
                    +\frac{g'(t)^2}{k^2}.
\]
The next lemma records the profile rescaling underlying Lawlor's
comparison proposition \cite[Proposition~1.4.2]{Lawlor91}.

\begin{lemma}[Dimension comparison]\label{lem:lawlor-comparison}
Let \(k\ge k_0\ge3\). Suppose a nonnegative nonincreasing profile \(g\)
satisfies \eqref{eq:profile-conditions}, with first zero
\(T_0\le1/\alpha_0\), and
\[
 Q_{k_0}[g](u)\le E_{\alpha_0}(u)^2
 \qquad(0\le u\le T_0).
\]
Then \(h(t)=g(kt/k_0)\) is a vanishing profile in dimension \(k\)
for the comparison parameter \(\alpha=k\alpha_0/k_0\).
\end{lemma}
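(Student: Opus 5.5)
The plan is to verify directly that the rescaled profile \(h(t)=g(kt/k_0)\) satisfies \eqref{eq:profile-conditions} on \([0,T]\) with \(T=k_0T_0/k\), and that
\[
 Q_k[h](t)\le E_\alpha(t)^2\qquad(0\le t\le T),\qquad \alpha=\frac{k\alpha_0}{k_0}.
\]
Combined with \cref{lem:determinant}, this makes \(h\) a common vanishing profile whenever \(S\le\alpha^2\). Everything rests on the substitution \(u=kt/k_0\), under which \(t\in[0,T]\) corresponds exactly to \(u\in[0,T_0]\).

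\emph{Step 1: boundary and regularity conditions.} Since \(h\) is a linear reparametrization of \(g\), the following transfer at once.
\begin{itemize}
 \item \(h\) is \(C^1\), piecewise \(C^2\), nonnegative and nonincreasing.
 \item \(h(0)=g(0)=1\) and \(h'(0)=(k/k_0)g'(0)=0\).
 \item \(h>0\) on \([0,T)\) and \(h(T)=g(T_0)=0\).
\end{itemize}
Moreover \(\alpha T=\alpha_0T_0\le1\), so \(T\le1/\alpha\). Hence \(E_\alpha\ge0\) on \([0,T]\), and \cref{lem:determinant} applies on the whole interval.

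\emph{Step 2: the key inequality.} Using \(h'(t)=(k/k_0)g'(u)\) and \(t=k_0u/k\), we get
\[
 h(t)-\frac{t}{k}h'(t)=g(u)-\frac{u}{k}g'(u),\qquad \frac{h'(t)}{k}=\frac{g'(u)}{k_0}.
\]
Now \(g\ge0\), \(g'\le0\) and \(k\ge k_0\). Therefore
\[
 0\le g(u)-\frac{u}{k}g'(u)\le g(u)-\frac{u}{k_0}g'(u).
\]
Squaring preserves this inequality because both sides are nonnegative. This yields \(Q_k[h](t)\le Q_{k_0}[g](u)\le E_{\alpha_0}(u)^2\). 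Finally \(\alpha_0u=\alpha t\), so \(E_{\alpha_0}(u)=E_\alpha(t)\).

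\emph{Step 3: conclusion.} If \(S\le\alpha^2\), each \(\eps A_p\) is traceless by minimality and has \(|\eps A_p|^2\le\alpha^2\). \cref{lem:determinant} then gives
\[
 P_{p,\eps}(t)\ge E_\alpha(t)\ge0\qquad(0\le t\le T).
\]
Thus \eqref{eq:profile-inequality} holds with \(P_*\) replaced by \(E_\alpha\), and \(h\) is a vanishing profile in dimension \(k\) with vanishing angle \(\arctan T\).

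The only delicate point is the sign bookkeeping in Step 2. Monotonicity in \(k\) of the first term requires \(g'\le0\) and \(u\ge0\), and squaring requires the quantity \(g-ug'/k\) to be nonnegative. The second term actually improves, since \(1/k_0\) appears there with no loss. Apart from this, the argument is a routine change of variables.
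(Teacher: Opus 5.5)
Your proof is correct and matches the paper's argument essentially line for line: the substitution \(u=kt/k_0\), the identities \(h-\tfrac tk h'=g-\tfrac uk g'\) and \(h'/k=g'/k_0\), monotonicity of the first square in \(k\) from \(g\ge0\) and \(g'\le0\), the identity \(E_{\alpha_0}(u)=E_\alpha(t)\), and \(\alpha T=\alpha_0T_0\le1\). Your Step 3, which invokes \cref{lem:determinant}, is how the paper uses the lemma in the proofs of \cref{prop:explicit,prop:improved} rather than part of the lemma itself; also, the second term is exactly unchanged, not improved.
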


\begin{proof}
Put \(u=kt/k_0\). Since \(g\ge0\) and \(g'\le0\),
\[
 Q_k[h](t)
 =\left(g(u)-\frac{u}{k}g'(u)\right)^2
                  +\frac{g'(u)^2}{k_0^2}
 \le Q_{k_0}[g](u)
 \le E_{\alpha_0}(u)^2
 =E_{k\alpha_0/k_0}(t)^2.
\]
The initial conditions are preserved, and the first zero of \(h\) is
\(k_0T_0/k\le1/\alpha\).
\end{proof}

Zhang \cite[Lemma~2.4 and Section~5.1]{Zhang26} also establishes a
dimension-comparison formula for the vanishing angles associated with
Lawlor's finite-dimensional determinant bound \(F(\alpha,t,m)\).

\begin{proof}[Proof of \cref{prop:explicit}]
Lawlor's Table~1.4.1 \cite[p.~21]{Lawlor91}, at
\((k_0,\alpha_0^2)=(12,19)\), gives the upper bound
\(11.23^\circ\) for the comparison vanishing angle; the table entries
are rounded upwards to the next hundredth of a degree
\cite[p.~19, note~(6)]{Lawlor91}. Thus there is a common comparison
profile for \(E_{\sqrt{19}}\) in dimension twelve, with vanishing angle
at most \(11.23^\circ\). By \cref{lem:lawlor-comparison}, a comparison
profile exists in dimension \(k\ge12\) with
\[
 \alpha=\frac{k\sqrt{19}}{12}.
\]
Since \(k^2/8<19k^2/144=\alpha^2\), assumption
\eqref{eq:explicit-bound} gives \(|\eps A_p|^2\le\alpha^2\)
for every \(p\) and both signs. Lemma~\ref{lem:determinant} makes this
profile valid uniformly on \(\Sigma\), so \cref{main:criterion} applies.
\end{proof}

To obtain further area-minimizing cones, we use the following explicit
polynomial profile. Combined with \cref{lem:lawlor-comparison}, it yields
the curvature bound in \cref{prop:improved}.

\begin{lemma}[An explicit vanishing profile]\label{prop:base-profile}
There is a nonnegative nonincreasing profile \(g\) satisfying
\eqref{eq:profile-conditions}, with first zero \(T_0=1/\sqrt{88}\), such that
\[
 Q_{24}[g](t)\le E_{\sqrt{88}}(t)^2
 \qquad(0\le t\le T_0).
\]
\end{lemma}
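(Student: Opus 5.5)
The plan is to construct \(g\) explicitly as a polynomial after the substitution \(t=T_0x\), \(x\in[0,1]\), where \(T_0=1/\sqrt{88}\). Since \(\sqrt{88}\,T_0=1\), the target becomes \(E_{\sqrt{88}}(T_0x)=(1-x)e^{x}\). Writing \(G(x)=g(T_0x)\), we have \(g'(t)=G'(x)/T_0\). The required inequality \(Q_{24}[g]\le E_{\sqrt{88}}^2\) then reads
\[
 \Bigl(G(x)-\tfrac{x}{24}G'(x)\Bigr)^2+\beta^2G'(x)^2\le(1-x)^2e^{2x},
 \qquad \beta=\frac{\sqrt{88}}{24}\approx0.391,
\]
for \(0\le x\le1\).

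\textbf{Choice of ansatz.} The right-hand side vanishes to second order at \(x=1\). Hence \(G\) must satisfy \(G(1)=G'(1)=0\), and moreover \(G-\tfrac{x}{24}G'\) and \(G'\) must both be \(O(1-x)\). I would therefore take \(G(x)=(1-x)^2p(x)\) with \(p\) a low-degree polynomial, \(p(0)=1\) and \(p'(0)=2\), which gives \(G(0)=1\) and \(G'(0)=0\). The first candidate is the cubic Hermite profile \(G=1-3x^2+2x^3\), i.e.\ \(p=1+2x\), with \(G'=-6x(1-x)\le0\). A quick check near \(x=1\) gives a left-hand side of about \((0.25^2+(6\beta)^2)(1-x)^2\approx5.6(1-x)^2\), against \(e^2(1-x)^2\approx7.39(1-x)^2\). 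At \(x=0\) both sides equal \(1\). If the cubic fails somewhere in the middle, I would add a free coefficient, \(p=1+2x+cx^2\), and tune \(c\), keeping \(G'\le0\) on \([0,1]\).

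\textbf{Verification.} With the factor \((1-x)^2\) extracted, write \(G=(1-x)^2p\) and \(G'=(1-x)q\), where \(q=-2p+(1-x)p'\). The inequality reduces to
\[
 \Bigl((1-x)p-\tfrac{x}{24}q\Bigr)^2+\beta^2q^2\le e^{2x}
\]
on \([0,1]\). I would bound \(e^{2x}\) from below by its Taylor polynomial of degree about \(6\), so that the problem becomes nonnegativity of an explicit polynomial on \([0,1]\). This polynomial has rational coefficients, apart from \(\beta^2=88/576=11/72\), which is rational. I would certify nonnegativity either by a Sturm sequence or by expanding in the Bernstein basis on \([0,1]\) and checking that all coefficients are nonnegative, subdividing if necessary. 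Monotonicity and nonnegativity of \(G\) follow from the sign of \(q\) and of \(p\). Finally, \(T_0=1/\sqrt{88}\) is the first zero, since \(p>0\) on \([0,1]\).

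\textbf{Main obstacle.} The main difficulty is the tight region near \(x=0\). There the two sides agree to leading order: both expansions start with \(1\), and the second-order terms must be compared. A direct computation gives, on the left, \(1+2x+O(x^2)\) from \(e^{2x}\)'s counterpart in the cross terms, together with a quadratic term \(\bigl(36\beta^2-6+\tfrac12\bigr)x^2\)-type contribution against \(2x^2\) from \(e^{2x}\). So \(x\approx0\) is where the admissible margin is smallest and where the choice of \(c\) matters. I would first carry out this second-order comparison symbolically to fix a feasible range for \(c\). Then I would check the middle of the interval numerically to choose \(c\) with margin, and only then certify the resulting polynomial inequality rigorously.
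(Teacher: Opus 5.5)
There is a genuine gap. Your overall strategy matches the paper's: factor $(1-x)^2$ out of a polynomial $G$, bound $e^{2x}$ below by a Taylor polynomial, and certify the resulting polynomial inequality with Bernstein coefficients. But the ansatz you propose cannot work. The lemma is an existence statement, so its whole content is a working profile, and your proposal does not supply one.

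Carry out your two endpoint tests for the family $p=1+2x+cx^2$, that is, $G=1+(c-3)x^2+(2-2c)x^3+cx^4$.

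\emph{Near $x=0$.} Write $G=1+a_2x^2+O(x^3)$. The left side is $1+\bigl(\tfrac{11}{6}a_2+\tfrac{11}{18}a_2^2\bigr)x^2+O(x^3)$. The right side is $(1-x)^2e^{2x}=1-x^2+O(x^3)$. This forces $11a_2^2+33a_2+18\le0$. With $a_2=c-3$ this means $c\in\bigl[\tfrac{33-\sqrt{297}}{22},\tfrac{33+\sqrt{297}}{22}\bigr]$, so in particular $c\ge0.717$.

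So the cubic Hermite profile ($c=0$) already fails near the origin. For example, at $x=0.1$ the left side is about $0.9937$ while the right side is about $0.9893$.

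\emph{At $x=1$.} Your reduced inequality becomes $\tfrac{89}{144}p(1)^2\le e^2$. Hence $p(1)=3+c\le 12e/\sqrt{89}\approx3.458$, i.e.\ $c\le0.458$.

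The two constraints are incompatible, so no value of $c$ works. In this family $p(1)=G''(0)/2+6$ is rigidly tied to the second-order coefficient at $0$.

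What is missing is a family with enough freedom to decouple $G''(0)/2$ from $p(1)$. You need $G''(0)/2\in\bigl[\tfrac{-33-\sqrt{297}}{22},\tfrac{-33+\sqrt{297}}{22}\bigr]$ and $p(1)\le12e/\sqrt{89}$ at the same time, and you still need a positive margin in the interior.

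The paper supplies this with the degree-$8$ profile
\[
G(u)=(1-u^2)^2\bigl(1-\tfrac{u^2}{4}+\tfrac{24u^3}{125}-\tfrac{47u^4}{500}\bigr).
\]
For it, $G''(0)/2=-9/4$ and $p(1)=424/125\approx3.392$, both just inside the admissible ranges. The second-order margin at the origin is only $u^2/32$. The paper then uses the degree-$8$ Taylor polynomial of $e^{2u}$ and checks that all thirteen Bernstein coefficients of $(R-Q)/(u^2(1-u)^2)$ are positive.

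Your plan becomes a proof only once you exhibit such a profile, with at least two free coefficients beyond the normalisation $p(0)=1$, $p'(0)=2$, together with its certificate.
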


\begin{proof}
Set
\begin{equation}\label{eq:explicit-profile}
 G(u)=(1-u^2)^2
 \left(1-\frac{u^2}{4}+\frac{24u^3}{125}-\frac{47u^4}{500}\right),
 \qquad g(t)=G(\sqrt{88}\,t).
\end{equation}
On \([0,1]\), the last factor is at least \(82/125>0\).
For \(0<u<1\),
\[
 -\frac{G'(u)}{u(1-u^2)}
 =\frac92-\frac{72}{125}u-\frac{281}{250}u^2
       +\frac{168}{125}u^3-\frac{94}{125}u^4
 \ge\frac{256}{125}>0.
\]
Thus \(g\) is positive and decreasing before \(T_0\), with
\(g(0)=1\), \(g'(0)=0\), and \(g(T_0)=g'(T_0)=0\).

Write
\begin{align*}
 Q(u)&=\left(G(u)-\frac{uG'(u)}{24}\right)^2
                       +\frac{11}{72}G'(u)^2,\\
 R(u)&=(1-u)^2\sum_{j=0}^{8}\frac{(2u)^j}{j!}.
\end{align*}
Since \(R(u)\le(1-u)^2e^{2u}\) on \([0,1]\), it suffices to prove
\(Q\le R\). The quotient
\[
 P(u)=\frac{R(u)-Q(u)}{u^2(1-u)^2}
\]
is a polynomial of degree \(12\). Writing
\(P(u)=\sum_{i=0}^{12}p_i u^i\), its Bernstein coefficients are
\begin{equation}\label{eq:bernstein}
 \beta_j=\sum_{i=0}^j p_i\frac{\binom ji}{\binom{12}i},
 \qquad 0\le j\le12.
\end{equation}
Exact rational arithmetic gives
\[
 \min_{0\le j\le12}\beta_j
 =\frac{4760623}{264000000}>0.
\]
The Mathematica calculation in Appendix~\ref{app:certificate}
computes these thirteen coefficients and verifies the identity
\[
 P(u)=\sum_{j=0}^{12}\beta_j\binom{12}{j}u^j(1-u)^{12-j}.
\]
Each basis polynomial is nonnegative on \([0,1]\), so \(Q\le R\).
Substituting \(u=\sqrt{88}\,t\) proves the asserted inequality.
\end{proof}

\begin{proof}[Proof of \cref{prop:improved}]
Apply \cref{lem:lawlor-comparison,prop:base-profile} with
\(k_0=24\) and \(\alpha_0=\sqrt{88}\). This gives
\[
 \alpha_k=\frac{k\sqrt{88}}{24},\qquad
 h_k(t)=G(\alpha_k t),\qquad T_k=1/\alpha_k.
\]
Since \(S\le11k^2/72=\alpha_k^2\),
\cref{lem:determinant} makes \(h_k\) a common vanishing profile for
both normal directions, and \cref{main:criterion} applies.
In fact, the vanishing angle is strictly below the minimum focal distance:
minimality and Cauchy--Schwarz give
\[
 K^2\le\frac{k-2}{k-1}S<\alpha_k^2,
 \qquad \arctan T_k<F.
\]
\end{proof}

\begin{remark}[Pointwise vanishing angles]\label{rem:pointwise}
Lawlor's original criterion allows the vanishing angle to depend on
\(p\) \cite[Theorems~1.2.1 and~2.3.8]{Lawlor91}.
Writing \(c(p)=\min\{c_+(p),c_-(p)\}\), the condition
\(\vartheta(p)<c(p)\) at every point ensures that the corresponding
normal wedges are disjoint. Provided the pointwise profiles satisfy
Lawlor's admissibility and regularity hypotheses, his criterion then
gives area minimization. For such a family of profiles \(g(p,t)\), the calibration expression
\eqref{eq:calibration-form} has the same comass formula with
\(g'\) replaced by \(\partial_tg\), since
\(d_\Sigma g\wedge\sigma=0\); this cancellation is explicit in
\cite[p.~36]{Lawlor91}.
\end{remark}

\section{Applications to Hsiang-type cones}
\label{sec:applications}

\subsection{Topologically trivial cones and curvature comparison}

\begin{proof}[Proof of \cref{main:hsiang}]
Taking the reported global curvature estimates in
\cite[Section~5 and Table~2]{HsiangSterling86} as input,
Table~\ref{tab:hs-data} gives the bound required by
\cref{prop:improved}.
The link is a non-equatorial embedded hypersphere by
\cite[Theorems~1 and~2]{HsiangSterling86}.
Hence the cone is topologically trivial, as observed
by Ilmanen and White \cite[p.~2]{IlmanenWhite15}, and is smooth away
from the vertex. It is nonflat. If it were smooth at the vertex,
dilation invariance would make it a hyperplane. Thus the vertex is
its only singularity.
\end{proof}

For the three families in \cref{main:hsiang}, we take the type,
group, cone dimension \(k\), stability bound,
and reported curvature maximum \(\widehat S\) from
Hsiang--Sterling's Table~2
\cite[pp.~244--245]{HsiangSterling86}.
Their stability proofs combine numerical approximation with
truncation-error estimates
\cite[Section~5, pp.~237--241]{HsiangSterling86}.
The symbol \(\widehat S\) distinguishes the reported value from
the actual maximum \(S\) in \eqref{eq:curvature-notation}.
To omit near-threshold cases, we use the selection rule
\begin{equation}\label{eq:selection-margin}
 \widehat S\le\frac9{10}B_i(k).
\end{equation}
Table~\ref{tab:hs-data} retains only the configurations selected for
\(B_2\), adds both bounds, and records the relative reserve
\(\delta_2=1-\widehat S/B_2(k)\).
Bold bounds satisfy \eqref{eq:selection-margin}.
The accompanying data file preserves all \(82\) original entries.

{\footnotesize
\setlength{\tabcolsep}{3pt}
\begin{longtable}{@{}clrrrrrr@{}}
\caption{Selected entries of Hsiang--Sterling, Table~2. Bold bounds satisfy \(\widehat S\le0.9B_i(k)\). The last column gives \(\delta_2\) as a percentage. Bounds \(B_2\) and percentages are rounded down to three and one decimal places, respectively.}\label{tab:hs-data}\\
\toprule
\(d\) & Group & \(k\) & \((k-2)^2/4\) & \(\widehat S\) & Prop.~1 & Prop.~2 & \(\delta_2\) (\%)\\
\midrule
\endfirsthead
\multicolumn{8}{c}{\tablename~\thetable\ (continued)}\\
\toprule
\(d\) & Group & \(k\) & \((k-2)^2/4\) & \(\widehat S\) & Prop.~1 & Prop.~2 & \(\delta_2\) (\%)\\
\midrule
\endhead
\midrule
\multicolumn{8}{r}{Continued on next page}\\
\endfoot
\bottomrule
\endlastfoot
4 & \(\SO(2)\times\SO(19)\) & 38 & \(324\) & \(190.0\) & \(180.5\) & \(\mathbf{220.611}\) & \(13.8\)\\
4 & \(\SO(2)\times\SO(20)\) & 40 & \(361\) & \(200.3\) & \(200\) & \(\mathbf{244.444}\) & \(18\)\\
4 & \(\SO(2)\times\SO(21)\) & 42 & \(400\) & \(210\) & \(220.5\) & \(\mathbf{269.5}\) & \(22\)\\
4 & \(\SO(2)\times\SO(22)\) & 44 & \(441\) & \(221\) & \(242\) & \(\mathbf{295.777}\) & \(25.2\)\\
4 & \(\SO(2)\times\SO(23)\) & 46 & \(484^{\dagger}\) & \(231\) & \(\mathbf{264.5}\) & \(\mathbf{323.277}\) & \(28.5\)\\
4 & \(\SO(2)\times\SO(24)\) & 48 & \(529\) & \(241\) & \(\mathbf{288}\) & \(\mathbf{352}\) & \(31.5\)\\
4 & \(\SO(2)\times\SO(25)\) & 50 & \(576\) & \(252\) & \(\mathbf{312.5}\) & \(\mathbf{381.944}\) & \(34\)\\
4 & \(\SO(2)\times\SO(26)\) & 52 & \(625\) & \(262\) & \(\mathbf{338}\) & \(\mathbf{413.111}\) & \(36.5\)\\
4 & \(\SO(2)\times\SO(27)\) & 54 & \(676\) & \(273\) & \(\mathbf{364.5}\) & \(\mathbf{445.5}\) & \(38.7\)\\
4 & \(\SO(2)\times\SO(28)\) & 56 & \(729\) & \(283\) & \(\mathbf{392}\) & \(\mathbf{479.111}\) & \(40.9\)\\
4 & \(\SO(2)\times\SO(29)\) & 58 & \(784\) & \(293\) & \(\mathbf{420.5}\) & \(\mathbf{513.944}\) & \(42.9\)\\
4 & \(\SO(2)\times\SO(30)\) & 60 & \(841\) & \(303\) & \(\mathbf{450}\) & \(\mathbf{550}\) & \(44.9\)\\
4 & \(\SO(2)\times\SO(31)\) & 62 & \(900\) & \(313\) & \(\mathbf{480.5}\) & \(\mathbf{587.277}\) & \(46.7\)\\
4 & \(\SO(2)\times\SO(32)\) & 64 & \(961\) & \(324\) & \(\mathbf{512}\) & \(\mathbf{625.777}\) & \(48.2\)\\
4 & \(\SO(2)\times\SO(33)\) & 66 & \(1024\) & \(335\) & \(\mathbf{544.5}\) & \(\mathbf{665.5}\) & \(49.6\)\\
4 & \(\SO(2)\times\SO(34)\) & 68 & \(1089\) & \(345\) & \(\mathbf{578}\) & \(\mathbf{706.444}\) & \(51.1\)\\
4 & \(\SO(2)\times\SO(35)\) & 70 & \(1156\) & \(355\) & \(\mathbf{612.5}\) & \(\mathbf{748.611}\) & \(52.5\)\\
4 & \(\SO(2)\times\SO(36)\) & 72 & \(1225\) & \(366\) & \(\mathbf{648}\) & \(\mathbf{792}\) & \(53.7\)\\
4 & \(\SO(2)\times\SO(37)\) & 74 & \(1296\) & \(376\) & \(\mathbf{684.5}\) & \(\mathbf{836.611}\) & \(55\)\\
4 & \(\SO(2)\times\SO(38)\) & 76 & \(1369\) & \(386\) & \(\mathbf{722}\) & \(\mathbf{882.444}\) & \(56.2\)\\
4 & \(\SO(2)\times\SO(39)\) & 78 & \(1444\) & \(397\) & \(\mathbf{760.5}\) & \(\mathbf{929.5}\) & \(57.2\)\\
4 & \(\SO(2)\times\SO(40)\) & 80 & \(1521\) & \(407\) & \(\mathbf{800}\) & \(\mathbf{977.777}\) & \(58.3\)\\
4 & \(\SO(2)\times\SO(41)\) & 82 & \(1600\) & \(417\) & \(\mathbf{840.5}\) & \(\mathbf{1027.277}\) & \(59.4\)\\
4 & \(\SO(2)\times\SO(42)\) & 84 & \(1681\) & \(427\) & \(\mathbf{882}\) & \(\mathbf{1078}\) & \(60.3\)\\
4 & \(\SO(2)\times\SO(43)\) & 86 & \(1764\) & \(438\) & \(\mathbf{924.5}\) & \(\mathbf{1129.944}\) & \(61.2\)\\
4 & \(\SO(2)\times\SO(44)\) & 88 & \(1849\) & \(448\) & \(\mathbf{968}\) & \(\mathbf{1183.111}\) & \(62.1\)\\
4 & \(\SO(2)\times\SO(45)\) & 90 & \(1936\) & \(459\) & \(\mathbf{1012.5}\) & \(\mathbf{1237.5}\) & \(62.9\)\\
4 & \(\SO(2)\times\SO(46)\) & 92 & \(2025\) & \(469\) & \(\mathbf{1058}\) & \(\mathbf{1293.111}\) & \(63.7\)\\
4 & \(\SO(2)\times\SO(47)\) & 94 & \(2116\) & \(479\) & \(\mathbf{1104.5}\) & \(\mathbf{1349.944}\) & \(64.5\)\\
4 & \(\SO(2)\times\SO(48)\) & 96 & \(2209\) & \(490\) & \(\mathbf{1152}\) & \(\mathbf{1408}\) & \(65.1\)\\
4 & \(\SO(2)\times\SO(49)\) & 98 & \(2304\) & \(500\) & \(\mathbf{1200.5}\) & \(\mathbf{1467.277}\) & \(65.9\)\\
4 & \(\SO(2)\times\SO(50)\) & 100 & \(2401\) & \(511\) & \(\mathbf{1250}\) & \(\mathbf{1527.777}\) & \(66.5\)\\
4 & \(\SO(2)\times\SO(100)\) & 200 & \(9801\) & \(1028\) & \(\mathbf{5000}\) & \(\mathbf{6111.111}\) & \(83.1\)\\
4 & \(\SO(2)\times\SO(200)\) & 400 & \(39601\) & \(2065\) & \(\mathbf{20000}\) & \(\mathbf{24444.444}\) & \(91.5\)\\
\addlinespace[2pt]
4 & \(S(U(2)\times U(8))\) & 32 & \(225\) & \(134.4\) & \(128\) & \(\mathbf{156.444}\) & \(14\)\\
4 & \(S(U(2)\times U(9))\) & 36 & \(289\) & \(152.2\) & \(162\) & \(\mathbf{198}\) & \(23.1\)\\
4 & \(S(U(2)\times U(10))\) & 40 & \(361\) & \(169.7\) & \(\mathbf{200}\) & \(\mathbf{244.444}\) & \(30.5\)\\
4 & \(S(U(2)\times U(11))\) & 44 & \(441\) & \(187\) & \(\mathbf{242}\) & \(\mathbf{295.777}\) & \(36.7\)\\
4 & \(S(U(2)\times U(12))\) & 48 & \(529\) & \(205\) & \(\mathbf{288}\) & \(\mathbf{352}\) & \(41.7\)\\
4 & \(S(U(2)\times U(13))\) & 52 & \(625\) & \(223\) & \(\mathbf{338}\) & \(\mathbf{413.111}\) & \(46\)\\
4 & \(S(U(2)\times U(14))\) & 56 & \(729\) & \(240\) & \(\mathbf{392}\) & \(\mathbf{479.111}\) & \(49.9\)\\
4 & \(S(U(2)\times U(15))\) & 60 & \(841\) & \(258\) & \(\mathbf{450}\) & \(\mathbf{550}\) & \(53\)\\
4 & \(S(U(2)\times U(16))\) & 64 & \(961\) & \(276\) & \(\mathbf{512}\) & \(\mathbf{625.777}\) & \(55.8\)\\
4 & \(S(U(2)\times U(17))\) & 68 & \(1089\) & \(294\) & \(\mathbf{578}\) & \(\mathbf{706.444}\) & \(58.3\)\\
4 & \(S(U(2)\times U(18))\) & 72 & \(1225\) & \(311\) & \(\mathbf{648}\) & \(\mathbf{792}\) & \(60.7\)\\
4 & \(S(U(2)\times U(19))\) & 76 & \(1369\) & \(329\) & \(\mathbf{722}\) & \(\mathbf{882.444}\) & \(62.7\)\\
4 & \(S(U(2)\times U(20))\) & 80 & \(1521\) & \(347\) & \(\mathbf{800}\) & \(\mathbf{977.777}\) & \(64.5\)\\
4 & \(S(U(2)\times U(50))\) & 200 & \(9801\) & \(876\) & \(\mathbf{5000}\) & \(\mathbf{6111.111}\) & \(85.6\)\\
4 & \(S(U(2)\times U(100))\) & 400 & \(39601\) & \(1764\) & \(\mathbf{20000}\) & \(\mathbf{24444.444}\) & \(92.7\)\\
\addlinespace[2pt]
4 & \(\Sp(2)\times\Sp(4)\) & 32 & \(225\) & \(121.3\) & \(128\) & \(\mathbf{156.444}\) & \(22.4\)\\
4 & \(\Sp(2)\times\Sp(5)\) & 40 & \(361\) & \(153.6\) & \(\mathbf{200}\) & \(\mathbf{244.444}\) & \(37.1\)\\
4 & \(\Sp(2)\times\Sp(6)\) & 48 & \(529\) & \(186\) & \(\mathbf{288}\) & \(\mathbf{352}\) & \(47.1\)\\
4 & \(\Sp(2)\times\Sp(7)\) & 56 & \(729\) & \(218\) & \(\mathbf{392}\) & \(\mathbf{479.111}\) & \(54.4\)\\
4 & \(\Sp(2)\times\Sp(8)\) & 64 & \(961\) & \(251\) & \(\mathbf{512}\) & \(\mathbf{625.777}\) & \(59.8\)\\
4 & \(\Sp(2)\times\Sp(9)\) & 72 & \(1225\) & \(283\) & \(\mathbf{648}\) & \(\mathbf{792}\) & \(64.2\)\\
4 & \(\Sp(2)\times\Sp(10)\) & 80 & \(1521\) & \(315\) & \(\mathbf{800}\) & \(\mathbf{977.777}\) & \(67.7\)\\
4 & \(\Sp(2)\times\Sp(11)\) & 88 & \(1849\) & \(348\) & \(\mathbf{968}\) & \(\mathbf{1183.111}\) & \(70.5\)\\
4 & \(\Sp(2)\times\Sp(12)\) & 96 & \(2209\) & \(380\) & \(\mathbf{1152}\) & \(\mathbf{1408}\) & \(73\)\\
4 & \(\Sp(2)\times\Sp(13)\) & 104 & \(2601\) & \(413\) & \(\mathbf{1352}\) & \(\mathbf{1652.444}\) & \(75\)\\
4 & \(\Sp(2)\times\Sp(14)\) & 112 & \(3025\) & \(445\) & \(\mathbf{1568}\) & \(\mathbf{1916.444}\) & \(76.7\)\\
4 & \(\Sp(2)\times\Sp(15)\) & 120 & \(3481\) & \(477\) & \(\mathbf{1800}\) & \(\mathbf{2200}\) & \(78.3\)\\
4 & \(\Sp(2)\times\Sp(16)\) & 128 & \(3969\) & \(510\) & \(\mathbf{2048}\) & \(\mathbf{2503.111}\) & \(79.6\)\\
4 & \(\Sp(2)\times\Sp(17)\) & 136 & \(4489\) & \(542\) & \(\mathbf{2312}\) & \(\mathbf{2825.777}\) & \(80.8\)\\
4 & \(\Sp(2)\times\Sp(18)\) & 144 & \(5041\) & \(575\) & \(\mathbf{2592}\) & \(\mathbf{3168}\) & \(81.8\)\\
4 & \(\Sp(2)\times\Sp(19)\) & 152 & \(5625\) & \(607\) & \(\mathbf{2888}\) & \(\mathbf{3529.777}\) & \(82.8\)\\
4 & \(\Sp(2)\times\Sp(20)\) & 160 & \(6241\) & \(640\) & \(\mathbf{3200}\) & \(\mathbf{3911.111}\) & \(83.6\)\\
4 & \(\Sp(2)\times\Sp(25)\) & 200 & \(9801\) & \(801\) & \(\mathbf{5000}\) & \(\mathbf{6111.111}\) & \(86.8\)\\
4 & \(\Sp(2)\times\Sp(50)\) & 400 & \(39601\) & \(1609\) & \(\mathbf{20000}\) & \(\mathbf{24444.444}\) & \(93.4\)\\
\end{longtable}
}

\noindent\(^{\dagger}\) In the \(\SO(2)\times\SO(23)\) row,
the stability formula gives \(484\) in place of the printed \(448\).

Within these three families, the same ten-percent reserve selects
\(61\), \(64\), and \(68\) configurations for \(B_1(k)\),
Lawlor's \(19k^2/144\), and \(B_2(k)\), respectively.
Relative to Lawlor's bound, \cref{prop:improved} adds
\(\SO(2)\times\SO(q)\) for \(19\le q\le21\) and
\(S(U(2)\times U(8))\).

\section*{Acknowledgments}
This work was supported by the National Natural Science Foundation of China
(No.\ 12301068), the project of Stable Support for Youth Team in Basic
Research Field, CAS (YSBR-001), and the Xiaomi Young Scholar Fund.

\medskip
\noindent\textbf{AI disclosure.}
ChatGPT (OpenAI) was used to assist with language editing, literature searches,
symbolic and algebraic computations, and AI-assisted figure generation based
on the author's sketches and instructions. The author takes full responsibility
for the mathematical content and computational conclusions.

\clearpage
\appendix
\section{Mathematica calculation for Lemma~\ref{prop:base-profile}}
\label{app:certificate}
\begin{center}
\includegraphics[width=\textwidth,trim=55bp 285bp 44bp 121bp,clip]{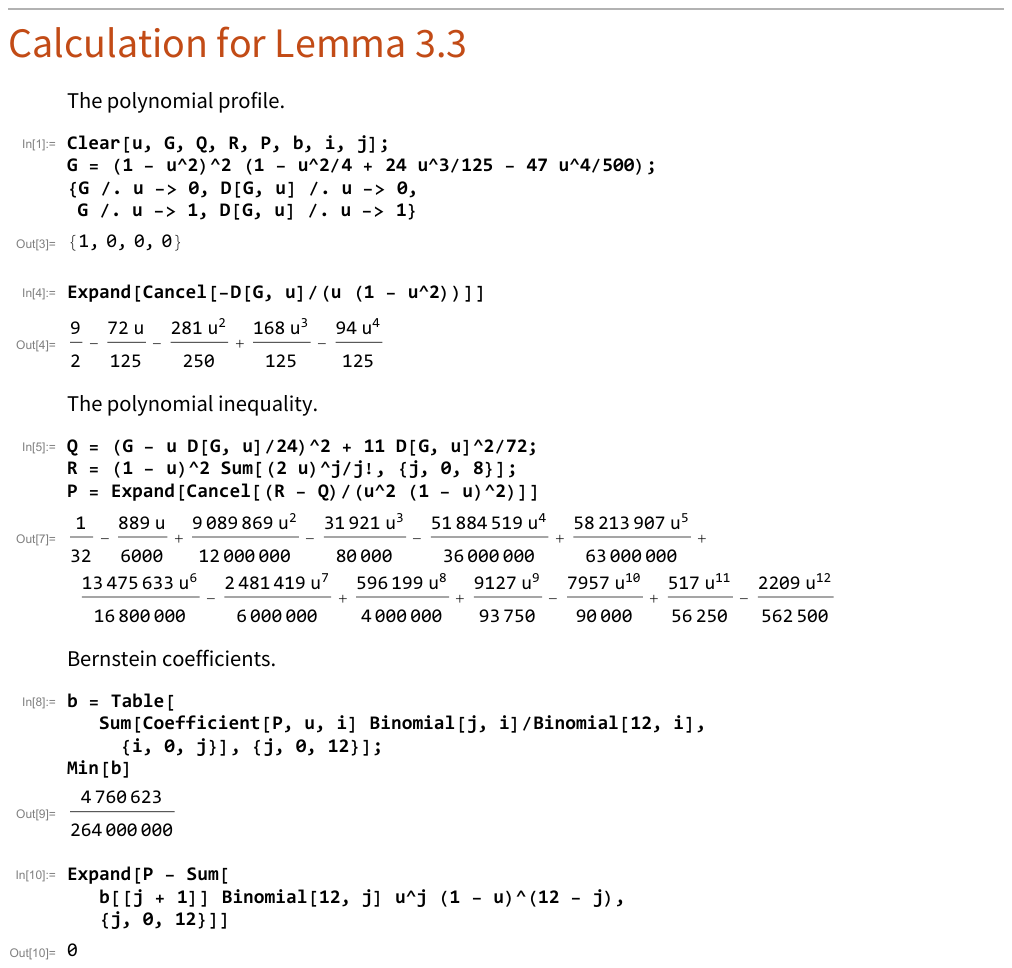}
\end{center}
\end{document}